\newtheorem{theorem}{Theorem}
\newtheorem{proposition}{Proposition}
\newtheorem{lemma}{Lemma}
\newtheorem{remark}{Remark}
\newtheorem*{remark*}{Remark}
\newtheorem{definition}{Definition}
\newtheorem*{theorem*}{Theorem}
\newtheorem*{conjecture*}{Conjecture}
\newtheorem*{notation*}{Notation}
\newtheorem*{app*}{Application to analytic GIT-quotients}
\newtheorem*{appl*}{Application}
\newtheorem*{fact}{Fact}
\numberwithin{equation}{section}
\newcommand{\C}{{\mathbb C} }
\newcommand{\cA}{{\mathcal A} }
\newcommand{\cE}{{\mathcal E} }
\newcommand{\cF}{{\mathcal F} }
\newcommand{\cI}{{\mathcal I} }
\newcommand{\cK}{{\mathcal K} }
\newcommand{\cL}{{\mathcal L} }
\newcommand{\cM}{{\mathcal M} }
\newcommand{\cN}{{\mathcal N} }
\newcommand{\cO}{{\mathcal O} }
\newcommand{\wt}{\widetilde}
\newcommand{\wh}{\widehat}
\def\ol#1{{\overline{#1}}}
\def\ul#1{{\underline{#1}}}
\def\ii{\sqrt{-1}}
\def\pt{\partial}
\def\cinf{C^\infty}
\def\we{\wedge}
\def\tr{\mathrm{tr}}
\def\s2{/\hspace{-3pt}/}
\def\san2{/\hspace{-3pt}/_{\hspace{-1pt}an}}
\def\VsG{{V\s2 G}}
\def\blfootnote{\xdef\@thefnmark{}\@footnotetext}
\def\ka{K\"ah\-ler}
\def\he{Her\-mite-Ein\-stein}
\def\wp{Weil-Pe\-ters\-son}
\def\ks{Kodaira-Spencer}
\author{Nicholas Buchdahl}
\address{School of Mathematical Sciences\\ University of Adelaide\\ Adelaide\\
Australia 5005}
\email{nicholas.buchdahl@adelaide.edu.au}
\author{Georg Schumacher}
\address{Fachbereich Mathematik und Informatik, Philipps-Universit\"at Marburg, Lahnberge, Hans-Meerwein-Stra{\ss}e, D-35032 Marburg, Germany}
\email[Corresponding author]{schumac@mathematik.uni-marburg.de}
\begin{document}

\title{An Analytic Application of Geometric Invariant Theory}

\begin{abstract}
   Given a compact \ka\ manifold, Geometric Invariant Theory is applied to construct analytic GIT-quotients that are local models for a classifying space of (poly)stable holomorphic vector bundles containing the coarse moduli space of stable bundles as an open subspace. For local models invariant generalized \wp\ forms exist on the parameter spaces, which are restrictions of symplectic forms on smooth ambient spaces. If the underlying \ka\ manifold is of Hodge type, then the \wp\ form on the moduli space of stable vector bundles is known to be the Chern form of a certain determinant line bundle equipped with a Quillen metric. It gives rise to a holomorphic line bundle on the classifying GIT space together with a continuous hermitian metric.
\end{abstract}
\keywords{Analytic GIT-quotients, polystable vector bundles on compact \ka\ manifolds, Hermite-Einstein connections, moduli spaces}

\maketitle

\blfootnote{2020 Mathematics Subject Classification. 32G13, 14L24, 32L10, 32Q15}

\section{Introduction}
The aim of this article is to apply methods of Geometric Invariant Theory to Deformation Theory and construct a {\it classifying space} $\cM_{GIT}$ for (poly)stable vector bundles on compact \ka\ manifolds (Theorem~\ref{th:main}), concluding previous work from \cite{bu-sch}.  The space $\cM_{GIT}$ carries a natural complex analytic structure. As its points correspond to isomorphism classes of polystable holomorphic vector bundles, such a space is sometimes referred to as a
{\it coarse moduli space}, although it satisfies slightly weaker axioms. The notion of a {\em coarse moduli space of semistable vector bundles} in algebraic geometry, however, refers to a space whose points include certain equivalence classes of semistable bundles, where bundles with the same Narasimhan-Seshadri filtration are identified.

The underlying topological space is, by the methods of Atiyah, Hitchin, and Singer \cite[\S~6]{AHS}, and Donaldson, Kronheimer \cite[Sect.~4.2.1]{d-k} a Hausdorff space. The coarse moduli space of stable holomorphic vector bundles is contained in the classifying space as the complement of an analytic subspace.

Given a semi-universal but not universal deformation of a holomorphic vector bundle say, one cannot expect that the action of the automorphism group $G$ of the given object  on the linear space $V$ of infinitesimal deformations provides an action on the parameter space that is compatible with the respective family. However, this property holds in the case of polystable vector bundles (Theorem~\ref{th:Psi}), which was shown in \cite{bu-sch}. It is emphasized that due to Theorem~\ref{th:stab}, (also from \cite{bu-sch}) it is possible to speak about ``(poly)stable points'' referring to (poly)stable fibers or (poly)stable points  with respect to group actions so that methods of deformation theory match the GIT approach.

The analogous problem for moduli of cscK manifolds was solved by Dervan-Naumann \cite{d-n}. The role of our Theorem~\ref{th:stab} was played by the result of Sz\'ekely-hidi \cite{sz}.

The action of the reductive group $G$ on the vector space $V$ of infinitesimal deformations of a polystable bundle yields a GIT-quotient $V\s2 G$. On the other hand the parameter space $S$ for a semi-universal deformation of a polystable bundle has been realized as a closed analytic subspace of a neighborhood $U\subset V$ of the origin together with a restricted action of $G$. There exists a $G$-saturated open subset $\wt U\supset U$ and a closed, $G$-invariant, analytic subspace $\wt S\subset \wt U$ such that $S = \wt S \cap U$ (cf.\ Theorem~\ref{th:glob}).

In fact the algebraic approach to construct a GIT-quotient of $\wt S$ cannot be applied. However, the analyticity of the image of $\wt S$ in $V\s2 G$ follows from Kuhlmann's generalization of the Remmert proper mapping theorem  (Theorem~\ref{th:localmain}). The image $\wt S\san2 G$ is called analytic GIT-quotient. It is a closed analytic subset of the open set $\wt U\san2 G \subset V\s2 G$.

In case the analytic structure on $S$ is weakly normal, it follows immediately that the structure sheaf on the quotient consists of invariant holomorphic functions, or more generally the weak normalization of $\wt S\san2 G$ carries the $G$-invariant holomorphic functions on the weak normalization of $\wt S$ so that a globalization is constructed in the weakly normal category.

For the general case (cf.\ the approach by Dervan-Naumann \cite{d-n}) one observes that Heinzner's complexification theorem (\cite{he}) implies that $\wt S$ is Stein if $U$ has this property, and Roberts' theorem on $G$-sheaves \cite{r} yields that holomorphic functions on the analytic GIT-quotient are exactly the $G$-invariant functions on the pull-back to $\wt U$. In order to glue local analytic GIT-quotients together (Theorem~\ref{th:main}) there exists Snow's theorem \cite{sn}, who also proves the existence of a categorical quotient of a Stein space by a reductive group. In our situation a simple direct application of Luna's slice theorem for the action on $V$ is possible  (Theorem~\ref{th:appLuna}) by restricting an algebraic slice to the given analytic subspace. The result is the classifying space $\cM_{GIT}$ of isomorphism classes of polystable holomorphic vector bundles.

Fiber integral formulas were used in \cite{f-s} to define a generalized \wp\ form on the moduli space of compact \ka\ manifolds with constant scalar curvature, and to construct a determinant line bundle together with a Quillen metric, whose curvature is, up to a numerical constant, equal to the \wp\ form. For moduli of stable vector bundles such a construction ultimately goes back to Donaldson \cite{do}. For details see e.g. \cite{b-s}. Using secondary Bott-Chern classes and the Donaldson invariant one can find \ka\ potentials in the case of holomorphic vector bundles, whereas in the case of cscK manifolds the Mabuchi functional plays a similar role \cite{d-n}.

There are various ways to extend determinant line bundles. The general method is to extend the \wp\ form as a positive current from the complement of an analytic set into its closure, provided the given families extend as singular families of compact manifolds or as coherent sheaves respectively. This was done in \cite{sch} by means of $C^0$-estimates for Monge-Amp\`ere equations and in \cite{b-s} using Donaldson invariants respectively. Once such extensions exist, the determinant line bundles together with the Quillen metrics can be extended as singular, hermitian (holomorphic) line bundles after blowing up the boundary if necessary \cite{sch}. Such results apply to certain compactifications of the moduli spaces.

Less qualitative and more specific solutions are desirable. Dervan and Naumann succeeded in extending the determinant line bundle for analytic GIT spaces of cscK-manifolds in \cite{d-n} arriving at line bundles on these classifying spaces.

For a semi-universal deformation of a polystable vector bundle we use the fiber integral for the \wp\ form $\omega^{WP}$ (cf.\cite{b-s}) which together with Theorem~\ref{th:Psi} and \ref{th:stab} implies that it is the restriction of a symplectic form on a smooth ambient space. If $X$ is a Hodge manifold, more can be said. Techniques involving the Riemann-Roch-Hirzebruch formula by Bismut, Gillet, and Soul\'e provide a determinant line bundle together with a Quillen metric on a local parameter space such that the Chern form is equal to the \wp\ form on the stable locus. Also at polystable points the result yields the $L^2$-inner product of harmonic \ks\ tensors. The construction is compatible with descending to local analytic GIT-quotients and the gluing of local models, and it provides the classifying space with a \wp\ form that possesses a continuous $\pt\ol\pt$-potential. Then the general methods of Dervan and Naumann \cite{d-n} are applicable.

\textit{Acknowledgement.} G.Sch.\ thanks Indranil Biswas for several discussions.

\section{Analytic GIT-quotient}\label{se:anGIT0}
For the convenience of the reader we recall notions and facts from geometric invariant theory as far as these will be used here. The primary references include the work of Mumford-Fogarty-Kirwan, Ness, Hoskins, Kirwan  \cite{MFK,Nes,h,Kir}, and Thomas \cite{Th} -- for projective and affine GIT-quotients and moduli of vector bundles over Riemann surfaces see Newstead's book and article \cite{new, new1}.

Let the reductive Lie group $G$ act linearly on a finite dimensional complex vector space $V$. A point $ v\in V \backslash\{0\}$ is called {\em unstable} for the group action if $0\in V$ is contained in the closure of the orbit of $v$, otherwise it is called {\em semistable}. A semistable point is called {\em polystable} if its $G$-orbit is closed, and {\em stable} if it is polystable with a finite isotropy group. (The point $0\in V$ is not part of this classification -- in the present situation it is meaningful to define it as polystable).

The existing GIT-quotient $V\s2 G$ of the affine space $V$ is an affine space equipped with the algebra $\C[V]^G$ of $G$-invariant regular functions. Furthermore there exists a morphism $p:V \to V\s2 G$.

\subsection{Restricted actions of reductive groups on complex spaces}\label{se:restrac}
Actions of reductive groups on analytic space germs have been investigated (cf.\ G.~Müller's article \cite{mue}), however, a global approach is necessary. Applications of the Kempf-Ness Theorem \cite{kn1} will be needed first:

Assume that the group $G$ is the complexification of a maximal compact subgroup $K$, and let  $\rho: G\to GL(V)$ be a representation. By assumption the vector space $V$ carries a hermitian inner product with norm $m$ and corresponding moment map $\mu$ for the action of $K$ on $V$. In this context the point $0\in V$ is considered to be in $V^{ps}$ (with $\mu(0)=0$). 
\begin{theorem*}[{Kempf-Ness \cite{kn1}}]\strut
\begin{itemize}
	\item [(i)] The polystable locus $V^{ps}$ is equal to  $G\cdot\mu^{-1}(0)$.
	\item[(ii)] Every $G$-orbit  in $V^{ps}$ contains only one $K$-orbit of $\mu^{-1}(0)$.
	\item[(iii)] In particular the GIT-quotient $V\s2 G$ can be identified with the set theoretic quotient $V^{ps}/G$,
	\item[(iv)] and the space $\mu^{-1}(0)/K$ equipped with the quotient topology is homeomorphic to the GIT-quotient $V\s2 G$.
\end{itemize}
\end{theorem*}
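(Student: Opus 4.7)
The plan is to base everything on the \emph{Kempf-Ness function} $F_v\colon G\to\R_{\geq 0}$ defined by $F_v(g):=\|\rho(g)v\|^2$, and to exploit its convexity modulo $K$. Because $K$ acts by unitaries, $F_v(kg)=F_v(g)$, so $F_v$ descends to the symmetric space $K\backslash G$ of non-positive curvature. Using the Cartan (polar) decomposition $G=K\cdot\exp(\ii\mathfrak{k})$, every geodesic issuing from the base point has the form $t\mapsto \exp(\ii tY)$ with $Y\in\mathfrak{k}$; diagonalising the self-adjoint operator $\ii\,\rho_*(Y)$ with real eigenvalues $\lambda_j$ and orthonormal eigenvectors $v_j$ gives
\[
F_v(\exp(\ii tY))\;=\;\sum_j |\langle v,v_j\rangle|^2\, e^{2t\lambda_j},
\]
a finite sum of real exponentials. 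This is convex in $t$ and strictly convex along $Y$ unless $\rho_*(Y)$ annihilates $v$. That convexity on the symmetric space is the decisive ingredient.

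First I would establish the infinitesimal link to $\mu$: for $Y\in\mathfrak{k}$ the derivative $\tfrac{d}{dt}|_{t=0}F_v(\exp(\ii tY))$ equals, up to a positive real factor, the moment map pairing $\langle\mu(v),Y\rangle$. Hence $1\in G$ is a critical point of $F_v$ iff $v\in\mu^{-1}(0)$, and by the convexity just noted such a critical point is automatically a minimum along every geodesic and so a global minimum on $K\backslash G$. This yields the inclusion $\mu^{-1}(0)\setminus\{0\}\subset V^{ps}$: for nonzero $v\in\mu^{-1}(0)$ we have $\|\rho(g)v\|^2\geq\|v\|^2>0$, so $0\notin\overline{Gv}$, and the standard properness argument for $F_v$ modulo $\mathrm{Stab}_G(v)$ shows that the orbit is closed. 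Equivariance of $\mu$ then gives $G\cdot\mu^{-1}(0)\subset V^{ps}$, which is the easy half of (i).

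For the reverse inclusion of (i) and for (ii), pick $v\in V^{ps}\setminus\{0\}$; since $G\cdot v$ is closed and $\|\cdot\|^2$ is proper on $V$, the norm attains a minimum on the orbit at some $w=gv$, and $w$ is a critical point of $F_w$ at the identity, so $\mu(w)=0$. To see uniqueness of $w$ modulo $K$, suppose $v,w\in\mu^{-1}(0)$ lie in the same $G$-orbit, say $w=k\exp(\ii Y)v$ with $k\in K$ and $Y\in\mathfrak{k}$. The convex function $\varphi(t):=F_v(\exp(\ii tY))$ has vanishing derivative at both $t=0$ and $t=1$ because both endpoints sit over zeros of $\mu$. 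A convex combination of real exponentials with two critical points is constant, and the explicit expression above then forces $\lambda_j=0$ whenever $\langle v,v_j\rangle\neq 0$; hence $\exp(\ii Y)v=v$ and $w=kv$, as required.

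Parts (iii) and (iv) are then formal. By affine GIT, the points of $V\s2 G=\mathrm{Spec}\,\C[V]^G$ correspond bijectively to closed $G$-orbits in $V$, which by definition are the polystable orbits (together with $\{0\}$), giving (iii) as a set-theoretic identification. For (iv) the continuous composition $\mu^{-1}(0)\hookrightarrow V^{ps}\to V\s2 G$ descends to a continuous bijection $\mu^{-1}(0)/K\to V\s2 G$ by (i) and (ii); because each $K$-orbit in $\mu^{-1}(0)$ minimises $\|\cdot\|^2$ on its $G$-orbit, the preimage of any norm-bounded set in $V\s2 G$ is norm-bounded in $\mu^{-1}(0)$ and hence relatively compact, which supplies continuity of the inverse. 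The main technical obstacle I expect is the sharp form of the convexity argument in (ii): turning equality of derivatives at the two endpoints of a geodesic into the pointwise conclusion $\rho_*(Y)v=0$, rather than merely constancy of $F_v$ along that geodesic.
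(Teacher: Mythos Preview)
The paper does not prove this theorem at all: it is quoted as the Kempf--Ness theorem with a reference to \cite{kn1} and used as a black box throughout (notably in the proofs of Remark~\ref{re:actU} and Theorem~\ref{th:spr}). So there is no ``paper's own proof'' to compare against.

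That said, your proposal is essentially the original Kempf--Ness argument, and it is correct. The convexity of $t\mapsto \sum_j |\langle v,v_j\rangle|^2 e^{2t\lambda_j}$ along geodesics of $K\backslash G$, the identification of critical points of $F_v$ with zeros of $\mu$, and the uniqueness step in (ii) via strict convexity are all standard and sound. Your worry at the end is not a real obstacle: for a finite nonnegative combination of exponentials, two distinct critical points force constancy, and constancy of $\sum_j c_j e^{2t\lambda_j}$ with $c_j\geq 0$ genuinely forces $\lambda_j=0$ whenever $c_j>0$ (e.g.\ by looking at the second derivative, which is $\sum_j 4\lambda_j^2 c_j e^{2t\lambda_j}\geq 0$ and must vanish identically). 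One minor point: in the closedness part of (i) you invoke a ``standard properness argument for $F_v$ modulo $\mathrm{Stab}_G(v)$''; this is indeed standard, but it is the only place where the argument is not self-contained. The treatment of (iv) is fine; an alternative route to continuity of the inverse is to observe that $\|\cdot\|^2$, being $K$-invariant, descends to a proper continuous function on $V\s2 G$ via Neeman's results, but your compactness argument works as well.
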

We note the following consequence:

\begin{remark}\label{re:actU}
Let $U\subset V$ be an open ball around $0\in V$ with respect to the norm $m$.
Assume that $K$ is the group of unitary automorphisms of $V$. Then the image of $U$ under $p:V\to V\s2 G$ is an open neighborhood of $p(0)\in V\s2 G$.
\end{remark}

\begin{proof}
	The set $U$ is $K$-invariant and $\mu^{-1}(0)\cap U$ is open in $\mu^{-1}(0)$. 	
\end{proof}

For moduli theoretic applications the following situation is of interest. The existence of a complex structure on the quotient requires the existence of an action of the complex Lie group $G$ on a certain space, where initially only an action of the group germ of $G$ on the germ of $S$ at $0$ exists.

The situation resembles the action of $\C^*$ on $\C$ -- the restriction to the unit disk $\Delta\subset \C$ does not yield an action. Lacking an established notion we will speak of a ``restricted action'' -- also, if an action of $G$ on some total space still has to be constructed. For the intended application, a linear action of $G$ on the vector space $V$ is already given -- the restriction to an open neighborhood $U$ of the origin in $V$ is a restricted action, and finally there is a restricted action to a closed analytic subset $0\in S\subset U$ to be defined.
\begin{definition}[Restricted group action]\label{de:rgp}
Let a reductive group $G$ act linearly on a complex vector space $V$. Let $0\in S\subset U$ be a closed analytic subset of an open subset $U\subset V$.

A {\em restricted action} of $G$ on the pair $(S,U)$ is defined by the property that for all $s \in S$ and $g\in G$ with $g\cdot s\in U$ the point $g\cdot s$ is contained in $S$.

The induced equivalence relation on $S$ is given by
$$
s'\sim s'' \text{ if and only if there exists }g\in G \text{ such that }s''= g\cdot s'.
$$
\end{definition}

A restricted group action on a pair $(S,U)$ can be globalized in the following sense. The globalization is related to the complexification in the sense of Heinzner \cite{he}. We will give the short proof.

\begin{theorem}\label{th:glob}
	There exists a closed $G$-invariant analytic subset $\wt S:= \bigcup_{g\in G}g\cdot S$ of the $G$-invariant open set  $\wt U:= \bigcup_{g\in G}g\cdot U \subset V$ such that $\wt S\cap U=S$.
\end{theorem}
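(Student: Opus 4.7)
The natural candidate is of course the union $\wt S := \bigcup_{g\in G} g\cdot S$ sitting inside the $G$-invariant open set $\wt U := \bigcup_{g\in G} g\cdot U$. The task reduces to showing (a) that this set carries a well-defined closed analytic structure on $\wt U$, (b) that it is $G$-invariant, and (c) that intersecting it with $U$ recovers $S$. I would organize the argument as a gluing along the open cover $\{g\cdot U\}_{g\in G}$ of $\wt U$.

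The key step is a compatibility check on overlaps. For each $g\in G$ the translation $x\mapsto g\cdot x$ is a (linear) biholomorphism of $V$, so $g\cdot U$ is open in $V$ and $g\cdot S$ is a closed analytic subset of $g\cdot U$. I claim that on any intersection $g_1 U\cap g_2 U$ one has $g_1\cdot S\cap(g_1 U\cap g_2 U)=g_2\cdot S\cap(g_1 U\cap g_2 U)$. Indeed, take $x=g_1\cdot s_1\in g_1\cdot S$ with also $x\in g_2\cdot U$. Set $h:=g_2^{-1}g_1\in G$; then $h\cdot s_1=g_2^{-1}x\in U$, so the restricted-action axiom of Definition~\ref{de:rgp} applied to $s_1\in S$ and $h$ forces $h\cdot s_1\in S$, whence $x=g_2\cdot(h\cdot s_1)\in g_2\cdot S$. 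The reverse inclusion follows by swapping the roles of $g_1$ and $g_2$.

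With this set-theoretic agreement in place, the pieces glue: the local analytic structure on $g\cdot S\subset g\cdot U$ is transported from that of $S\subset U$ by the biholomorphism $g$, and on an overlap the two transports differ by composing with $h=g_2^{-1}g_1$, which is a global biholomorphism of $V$ that—by the previous paragraph—carries the relevant open piece of $S$ bijectively onto another open piece of $S$; hence it induces an isomorphism of the inherited analytic subspaces. Since closedness and analyticity are local, the glued object $\wt S$ is a closed analytic subset of $\wt U$.

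$G$-invariance of $\wt S$ is immediate from the definition, since $h\cdot\bigcup_{g} g\cdot S=\bigcup_{g}(hg)\cdot S=\wt S$. For the identity $\wt S\cap U=S$, the inclusion $S\subset\wt S\cap U$ is trivial (take $g=e$); conversely, any $x\in\wt S\cap U$ has the form $g\cdot s$ with $s\in S$ and $g\cdot s\in U$, so the axiom gives $x\in S$.

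The main obstacle, such as it is, lies entirely in the overlap argument: one needs that the set-theoretic axiom of restricted action really suffices to identify the two candidate analytic structures on an overlap. This is fine because the whole situation takes place inside the ambient linear $G$-space $V$, so the comparison isomorphism is just the restriction of a biholomorphism of $V$; there is no need to construct any new map. No further properness or reductivity input is required at this stage—those enter only later when one passes to the quotient.
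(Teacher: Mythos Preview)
Your argument is correct and rests on the same key observation as the paper's proof: applying the restricted-action axiom to the element $h=g_2^{-1}g_1$ in order to transport a point of $g_1\cdot S$ lying in $g_2\cdot U$ back into $S$. Your organization is in fact slightly cleaner. You prove the stronger statement that on each overlap $g_1 U\cap g_2 U$ the pieces $g_1\cdot S$ and $g_2\cdot S$ literally coincide, so that $\wt S\cap g\cdot U=g\cdot S$ for every $g$; closedness and analyticity of $\wt S$ in $\wt U$ are then immediate from the local description. The paper instead argues in two separate steps: first it shows that $S\cup g\cdot S$ is closed in $U\cup g\cdot U$ by taking a limit point $x\in\ol S\setminus S$ and using the axiom to place it in $g\cdot S$, and then it runs a second limit argument to show the full union $\bigcup_g g\cdot S$ is closed in $\wt U$. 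Your overlap formulation subsumes both steps at once and makes the gluing viewpoint explicit; the paper's version stays closer to pointwise closure-chasing. Either way the content is the same.
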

In general $\wt U \subsetneq V$, and $\wt S$ need not be affine. In fact, by Remark~\ref{re:actU} the set $W=p(U)$ is an open neighborhood of $p(0)$.
\begin{notation*}
	Given a restricted group action on a pair $(S,U)$ the tilde notation such as $\wt S\subset \wt U$ will be consistently used for the globalized spaces.
\end{notation*}
\begin{proof}[Proof of Theorem~\ref{th:glob}]
	Let $g\in G$ be given so that $U\cap g\cdot U\neq\emptyset$. The set $S\cup g\cdot S$ is locally analytic in $U\cup g\cdot U$. It is an analytic subset of $U\cup g\cdot U$ if it is closed. Let $\ol S$ be the closure in $U \cup g\cdot U$. Take $x\in \ol S\backslash S$. Then $x\in g\cdot S$. Near this point, on a certain neighborhood $U(x)$ of $x$ the set $g\cdot S$ is given by certain holomorphic equations $f_1=\ldots=f_m=0$. Consider $(g^{-1}\cdot U(x))\cap U$ and the  pulled back equations. Since $S$ is invariant in the restricted sense, these equations must vanish at $x$ so that $x\in g\cdot S$. The argument is symmetric in $U$ and $g\cdot U$, and the same argument holds for sets $g_1\cdot U$ and $g_2\cdot U$.
	
	Finally, the analyticity of $\cup_{g}g\cdot S\subset \wt U$ follows, if this set is closed in $\wt U$. Let $s\in \ol{\cup_{g}g\cdot S}\subset \wt U$, say $s=\lim g_{j}\cdot s_j\in g_{j_0}\cdot U$. Then $g^{-1}_{j_0}\cdot s =\lim_j g^{-1}_{j_0}g_{j}\cdot s_j \in U\cap S$ so that $s\in g_{j_0}S \subset \wt S$.
\end{proof}

In the algebraic setting an immediate argument shows that the ideal sheaf of functions that vanish on the image $p(\wt S)$ in the open subset $W=p(\wt U)\subset  V\s2 G$ is coherent as a certain ideal of $G$-invariant functions on preimages of open subsets. In the analytic setting a different approach is needed, namely first prove the analyticity of $p(\wt S)$ by Kuhlmann's generalization of Remmert's proper mapping theorem (cf.\ also the book by Whitney) \cite{ku,w,re}.
\begin{definition}
	Let $f:X \to Y$ be a continuous mapping of locally compact, Hausdorff topological spaces with countable topology. The map $f$ is called {\em semi-proper}, if for any point of $Y$ there exists an open neighborhood $W$ and a compact set $L\subset X$ such that
	$$
	f(L)\cap W = f(X)\cap W.
	$$
\end{definition}
Obviously restrictions of semi-proper maps to closed subsets need not be semi-proper in general.
\begin{theorem*}[{Semi-proper mapping theorem, \cite{ku,w,re}}]
Let $f:X\to Y$ be a semi-proper holomorphic map of reduced complex spaces. Then $f(X)\subset Y$ is an analytic subset.	
\end{theorem*}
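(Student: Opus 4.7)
The plan is to adapt Remmert's proof of the proper mapping theorem, using the compact set $L$ furnished by semi-properness in place of compactness of fibres, and to argue by induction on $\dim X$. Fix $y_0\in Y$ and pick, by hypothesis, an open neighborhood $W$ of $y_0$ together with a compact $L\subset X$ satisfying $f(L)\cap W = f(X)\cap W$. Closedness of $f(X)\cap W$ in $W$ is immediate by extraction of convergent subsequences in $L$, so the substantive task is to produce local holomorphic equations for the image near $y_0$.

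I would then stratify by local fibre dimension: let $d$ be the generic fibre dimension on $L$ and set $X_+ = \{x\in X : \dim_x f^{-1}(f(x)) > d\}$, a closed analytic subset of $X$ by upper-semicontinuity of fibre dimension. On the open stratum $X^\circ = X\setminus X_+$ the map $f$ factors locally as a finite, open map composed with a smooth submersion, and the elementary symmetric functions of the finitely many sheets that meet $L$ over a small polydisc in $W$ yield holomorphic equations for $f(X^\circ)\cap W$. For the closed stratum $X_+$, which has strictly smaller dimension than $X$, I would invoke the inductive hypothesis; combined with the open-stratum result, this gives
\[
f(X)\cap W \;=\; \bigl(f(X^\circ)\cap W\bigr)\,\cup\,\bigl(f(X_+)\cap W\bigr)
\]
as a finite union of local analytic sets, hence analytic.

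The principal obstacle is twofold. First, one must ensure that the collection of sheets meeting $L$ over a neighborhood of $y_0$ is genuinely finite and that the symmetric functions formed from these sheets cut out precisely the image rather than something larger; this is handled by the compactness of $L$ together with the local-structure theorem for finite holomorphic maps. Second, and more delicately, one must arrange that the restriction $f|_{X_+}$ is itself semi-proper, which does \emph{not} follow automatically: a point $y\in f(X_+)\cap W$ need only admit a preimage in $L$, not in $L\cap X_+$, because different components of the fibre $f^{-1}(y)$ can have different local dimensions at different points. Overcoming this requires enlarging $L$ within $f^{-1}(f(X_+))$, or equivalently replacing $L$ by a compact set of representatives of the high-dimensional fibre components over a possibly smaller neighborhood $W'\subset W$. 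This descent of the semi-properness hypothesis to the closed stratum is the technical heart of the Kuhlmann-Whitney argument and the point where the proof departs nontrivially from Remmert's original.
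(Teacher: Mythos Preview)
The paper does not give its own proof of this statement: the semi-proper mapping theorem is quoted as a known result from the literature (Kuhlmann, Whitney, Remmert's proper mapping theorem) and used as a black box in the proof of Theorem~\ref{th:spr}. There is therefore nothing in the paper to compare your argument against.

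That said, your sketch does follow the general architecture of the Kuhlmann--Whitney proof: closedness of the image from compactness of $L$, stratification by local fibre dimension, elementary symmetric functions of sheets over the generic stratum, and induction on dimension for the degeneracy locus. You have correctly identified the one genuinely new difficulty relative to Remmert's proper case, namely that semi-properness of $f$ does not obviously pass to the restriction $f|_{X_+}$, since a point of $f(X_+)\cap W$ is only guaranteed a preimage in $L$, not in $L\cap X_+$. Your proposal does not actually carry out this step---it only names it---so as written it is a plan rather than a proof. If you want to complete it, the standard device is to work with the graph of $f$ and use a projection argument, or to observe that near $y_0$ the fibres of $f$ over $f(X_+)$ meeting $L$ have only finitely many irreducible components, and those of top dimension assemble into a compact set serving as the new $L$ for $X_+$ after shrinking $W$. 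Either way, this is where the real work lies, and you would need to consult Kuhlmann's original paper or Whitney's book for the details.
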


The following situation is given,
$$
\xymatrix{
	\wt S  \ar[rd]  \ar@{^{(}->} [r]& \wt U \ar[d]^{p|\wt U} \ar@{^{(}->} [r] & V \ar[d]^p\\
	& W \ar@{^{(}->} [r]& V\s2 G
}
$$
where $p$ is rational (with restriction to a classically open subset $\wt U$).
As in Theorem~\ref{th:glob} the set $U$ is a convex neighborhood of $0$ of the form
$$
U=U_c=\{ x; m(x)< c  \}
$$
for some $c>0$, and $\wt U$, $W=p(U)=p(\wt U)$ in the above sense.

\begin{theorem}\label{th:spr}
	The canonical holomorphic maps $p|\wt U : \wt U \to W$ and the restriction $p|\wt S : \wt S \to W$ are semi-proper. In particular $p(\wt S)\subset W\subset V\s2 G$ is a closed analytic subset of $W$. 	
\end{theorem}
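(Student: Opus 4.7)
The plan is to leverage the Kempf-Ness theorem to produce compact sets in $\wt U$ and $\wt S$ whose images cover an open neighbourhood of any given point of $W$. The key auxiliary object is the function
\[
m_W : V\s2 G \longrightarrow \R_{\ge 0}, \qquad m_W(w) := m(y) \ \text{ for any } y \in \mu^{-1}(0) \cap p^{-1}(w),
\]
which is well-defined by Kempf-Ness~(ii) and continuous via the $K$-invariance of $m$ combined with the homeomorphism $V\s2 G \cong \mu^{-1}(0)/K$ of Kempf-Ness~(iv). The basic Kempf-Ness norm inequality $m(y) \le m(x)$ (valid whenever $y \in \mu^{-1}(0)$ and $p(x) = p(y)$) then yields $m_W(w) \le m(x)$ for any affine representative $x$ of $w$; in particular $m_W(w) < c$ for every $w \in W = p(U_c)$.

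For $p|_{\wt U}$, given $w_0 \in W$, I would fix any $c' \in (m_W(w_0), c)$ and set
\[
L := \mu^{-1}(0) \cap \{m \le c'\}, \qquad W' := W \cap m_W^{-1}\bigl([0, c')\bigr),
\]
so that $L$ is compact, $L \subset U \subset \wt U$, and $W'$ is an open neighbourhood of $w_0$. For any $w \in W'$ the representative $y \in \mu^{-1}(0) \cap p^{-1}(w)$ satisfies $m(y) = m_W(w) < c'$, hence $y \in L$ and $w = p(y) \in p(L)$. Therefore $p(L) \cap W' = W' = p(\wt U) \cap W'$, establishing semi-properness.

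For $p|_{\wt S}$ the preliminary step is to verify that every $w \in p(\wt S)$ admits a representative in $\mu^{-1}(0) \cap \wt S$. Since $\wt S = G \cdot S$, pick $s \in S = \wt S \cap U$ with $p(s) = w$; then the minimal-norm representative $y \in \mu^{-1}(0) \cap \ol{G \cdot s}$ satisfies $m(y) \le m(s) < c$, so $y \in U \subset \wt U$, and writing $y = \lim_n g_n \cdot s$ with tail lying in $G\cdot s \subset \wt S$ inside $\wt U$ forces $y \in \wt S$ by closedness of $\wt S$ in $\wt U$. Replacing $L$ by $L_S := L \cap \wt S$ (compact as a closed subset of $L$) and repeating the argument above yields $p(L_S) \cap W' = p(\wt S) \cap W'$ when $w_0 \in p(\wt S)$. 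The same norm bound plus a sequential argument (any convergent sequence in $p(\wt S)$ admits representatives $y_n \in \wt S \cap \mu^{-1}(0)$ of bounded norm, whose limit lies in $\wt S$) shows that $p(\wt S)$ is closed in $W$, so for $w_0 \in W \setminus p(\wt S)$ I can take a disjoint open neighbourhood together with $L = \emptyset$. The analyticity of $p(\wt S) \subset W$ now follows directly from the semi-proper mapping theorem cited above.

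The chief obstacle is the preliminary step for $p|_{\wt S}$: ensuring that the minimal-norm polystable representative of every point in $p(\wt S)$ actually lies in $\wt S$. This combines the Kempf-Ness norm bound (keeping the representative inside the ball $U$) with the closedness of $\wt S$ in the open ambient $\wt U$; crucially, it exploits that the globalisation $\wt S = \bigcup_{g} g \cdot S$ of Theorem~\ref{th:glob} is taken relative to $\wt U$ rather than to all of $V$.
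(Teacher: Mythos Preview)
Your proof is correct and follows essentially the same route as the paper's: both use Kempf--Ness to pass to minimal-norm representatives in $\mu^{-1}(0)$, observe that these lie in a ball $U_{c'}$ (equivalently $U_a$) strictly inside $U$, and then invoke the closedness of $\wt S$ in $\wt U$ from Theorem~\ref{th:glob} to conclude that the polystable representative of any point of $p(\wt S)$ remains in $\wt S$. The only cosmetic differences are that you package the norm bound via the continuous function $m_W$ on $V\s2 G$ and take the slightly smaller compact set $\mu^{-1}(0)\cap\{m\le c'\}$, whereas the paper uses the full closed ball $\ol{U_b}$ and treats all $w\in W$ uniformly rather than splitting into the cases $w_0\in p(\wt S)$ and $w_0\notin p(\wt S)$.
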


\begin{proof}
	The Kempf-Ness Theorem will be applied. Let $w\in W$. Let $V^{ps}$ denote the polystable locus as above. There exists $u\in V^{ps}$ with minimal norm within its orbit such that $p(u)=w$. It follows from the construction of $\wt U$ and the convexity of $U$ that $u\in U$.  Choose $0<a<b<c$ such that  $u\in U_a$, and accordingly $w$ is contained in the open neighborhood $W_a=p(U_a)$. Pick the compact set $L= \ol{U_b}$. Then $p(L)\cap W_a = p(\wt U)\cap W_a$.
	
	Now the semi-properness of $p|\wt S:\wt S \to W$ is shown. Let $w\in W$. Let $0<a<b<c$ such that $ w \in W_a=p(U_a) \subset W = p(U)$.  We claim that
	$$
	p(\wt S)\cap W_a= p( \ol{U_b}\cap \wt S)\cap W_a.
	$$
	Namely, let $p(\wt s) \in W_a$ for some $\wt s\in \wt S$. Then there exists some $\wh s\in V^{ps}$ with $\ol{G\cdot \wt s} = G \cdot \wh s$ and $\wh s$ of minimal norm within its $G$-orbit. Hence $\wh s \in U_a\subset \ol{U_b}$. Furthermore $\wh s \in \ol{G\cdot \wt s}\subset \ol{\wt S} = \wt S$ by Theorem~\ref{th:glob}, Altogether $p(\wt s)\in p(\ol{U_b}\cap\wt S)$, where $\ol{U_b}\cap\wt S= \ol{U_b}\cap S$ is compact.
\end{proof}
Let $p:V\to \VsG$ be the quotient of a smooth (or maximal) affine space $V$ by a reductive group, $W\subset \VsG$ be an open Stein subset, and $\wt U =p^{-1}(W)$. Let $\wt S \subset \wt U$ be a $G$-invariant closed subset whose image $p(\wt S)$ is a closed analytic subset of $W$. Then
$$
\wt S \hookrightarrow \wt U \hookrightarrow V
$$
is $G$-equivariant. (This is the motivation for the notation $(\wt S \hookrightarrow \wt U \hookrightarrow V)$ as opposed to $(\wt S \subset \wt U \subset V)$.)

\begin{definition}\label{de:anGIT}
The triple $(p(\wt S) \hookrightarrow W \hookrightarrow V\s2 G)$ is called analytic GIT-quotient of $\wt S$ by $G$, and denoted by
$$
(\wt S\san2 G\hookrightarrow \wt U\san2 G\hookrightarrow \VsG).
$$
\end{definition}
Note that $p|\wt S:\wt S \to \wt S\san2 G$ identifies all points in a $G$-orbit  (and its closure).
\begin{remark}
By Theorem~\ref{th:glob} and Theorem~\ref{th:spr} any restricted group action in the sense of Definition~\ref{de:rgp} gives rise to an analytic GIT-quotient.
\end{remark}

\subsection{Complex structure for analytic GIT-quotients}\label{se:anGIT}
An analytic GIT-quotient $\wt S\san2 G$ according to Theorem~\ref{th:spr} carries the natural complex structure of an analytic subset of the open subset $\wt U\san2 G\subset V\s2 G$.

Let $V$ be a smooth affine space together with the action of a reductive group $G$ such that all points of $V$ are semistable. Denote by $p: V \to V\s2 G$ the GIT-quotient. It is known (cf.\ articles by Neeman \cite{nee,nee1}, Roberts \cite{r}, Snow \cite{sn}, and Thomas \cite{Th}) that due to the normality of $V\s2 G$ the (analytic) structure sheaf satisfies
\begin{equation}\label{eq:Ginv}
\cO_\VsG(W')= \cO^G_V(p^{-1}(W'))
\end{equation}
for any (Stein) open subset $W'\subset \VsG$, where a holomorphic function $\varphi$ on a subset $W'$ of the GIT-quotient corresponds to the $G$-invariant function $\varphi\circ p$ on $p^{-1}(W')\subset \wt S$.

The aim is to show that holomorphic functions on open subsets of the quotient $\wt S\san2 G$ are exactly the $G$-invariant holomorphic functions on the preimages under $p$.

Let $W\subset \VsG$ be a Stein open subset, and $\wt U=p^{-1}(W)\subset V$. Let $\wt S\subset \wt U$ be a closed, $G$-invariant, analytic subset. The coherence of the ideal sheaf of $G$-invariant functions vanishing on $\wt S$ can be shown directly.

\begin{remark}\label{pr:idS}
Let $p(\wt S)=\wt S\san2 G \subset W= \wt U\san2 G \subset \VsG$ be an analytic GIT-quotient. Then for the (coherent) vanishing ideals $\cI_{p(\wt S)}\subset \cO_\VsG|W$ and $\cI_{\wt S}\subset \cO_V|\wt U$ the following holds:
$$
\cI_{p(\wt S)}(W')= \cI_{\wt S}(p^{-1}(W'))^G,
$$
where $W'\subset W$ is an open Stein set, and where $\cI_{\wt S}(p^{-1}(W'))^G$ denotes $G$-invariant holomorphic functions that vanish on $\wt S$.
\end{remark}
\begin{proof}
Let $\varphi \in \cI_{p(\wt S)}(W')$. By \eqref{eq:Ginv} the function $\varphi\circ p$ is $G$-invariant and vanishes on $p^{-1}(W'\cap p(\wt S))$.

Conversely, let a $G$-invariant holomorphic function on $p^{-1}(W')$ be given that vanishes on $p^{-1}(p(\wt S)\cap W')$. 	By \ref{eq:Ginv} it is of the form $\varphi \circ p$, hence it descends to $\varphi$ with $\varphi|W'\cap p(\wt S)=0$.
\end{proof}

In this sense by Proposition~\ref{pr:idS}, given a $G$-invariant analytic set $\wt S \subset \wt U$ with analytic image $p(\wt S)\subset W$ the short exact sequence
$$
0 \to \cI_{p(\wt S)} \to \cO_\VsG|W \to \cO_{\wt S\san2 G} \to 0
$$
implies the exactness of
\begin{equation}\label{eq:str}
0 \to  \cI_{\wt S}(p^{-1}(W'))^G \to \cO_V(p^{-1}(W'))^G \to \cO_{\wt S\san2 G}(W') \to 0.
\end{equation}
Since $W'\subset V\san2 G$ is an open Stein subset, \eqref{eq:str} implies the existence of a natural injective map
\begin{equation}\label{eq:oSG}
 \iota:\cO_{\wt S\san2 G}(W') \hookrightarrow \cO_{\wt S}(p^{-1}(W'))^G.
\end{equation}

\begin{theorem}\label{th:invfct}
  The embedding \eqref{eq:oSG} is an isomorphism.
\end{theorem}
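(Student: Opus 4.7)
The plan is to prove surjectivity of $\iota$: given $f \in \cO_{\wt S}(p^{-1}(W'))^G$, I want to produce $\varphi \in \cO_{\wt S\san2 G}(W')$ with $\iota(\varphi) = f$. By the exact sequence \eqref{eq:str}, it suffices to lift $f$ to a $G$-invariant holomorphic function $\wh F \in \cO_V(p^{-1}(W'))^G$; then \eqref{eq:Ginv} yields $\wh F = \varphi \circ p$ for a unique $\varphi \in \cO_{V\s2 G}(W')$, and restricting $\varphi$ to $W' \cap p(\wt S)$ gives the desired preimage under $\iota$. The work therefore reduces to an extension problem with equivariance.

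For the extension, write $\wt U' := p^{-1}(W') \subset V$. Since the affine GIT morphism $p : V \to V\s2 G$ is an affine morphism of affine varieties and $W' \subset V\s2 G$ is open Stein, $\wt U'$ is a $G$-invariant Stein open subset of $V$. The intersection $\wt S \cap \wt U'$ is a closed analytic subset of $\wt U'$ with coherent ideal sheaf $\cI_{\wt S}$, so Cartan's Theorem B gives $H^1(\wt U', \cI_{\wt S}) = 0$ and hence a (not necessarily invariant) holomorphic extension $F \in \cO_V(\wt U')$ of $f$. To upgrade this to an invariant extension, I would average over the maximal compact subgroup $K \subset G$ with its normalized Haar measure:
$$
\wh F(x) := \int_K F(k \cdot x)\, dk.
$$
This integral is well-defined on the $K$-invariant domain $\wt U'$, and standard arguments (differentiating under the integral, using holomorphy of the action) show $\wh F \in \cO_V(\wt U')$ and that $\wh F$ is $K$-invariant. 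Because $f$ was already $G$-invariant on $\wt S \cap \wt U'$, averaging does not disturb it, so $\wh F|_{\wt S \cap \wt U'} = f$. The passage from $K$-invariance to $G$-invariance is then automatic for holomorphic functions on a $G$-invariant set: the closed complex subgroup $\{g \in G : g^*\wh F = \wh F\}$ contains $K$, and since $G = K^{\mathbb C}$ it must equal all of $G$. Hence $\wh F \in \cO_V(\wt U')^G$, and applying \eqref{eq:Ginv} concludes the argument as in the first paragraph.

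The main obstacle lies in reconciling the two constraints in Step 2: any extension produced by Theorem B will typically fail to be $G$-invariant, and conversely there is no direct Reynolds-type construction that simultaneously extends and invariantizes. Splitting into ``extend, then $K$-average, then use holomorphy'' resolves this, but depends on two subsidiary facts: the Steinness of $\wt U'$ (which rests on the affineness of $p$ and places us in the regime where Cartan's theorems apply), and the totally-real inclusion $K \subset G$ that promotes $K$-invariance of a holomorphic function to $G$-invariance. Both are classical but should be invoked explicitly; the rest of the proof is bookkeeping through the exact sequence \eqref{eq:str} and the identity \eqref{eq:Ginv}.
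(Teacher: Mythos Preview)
Your argument is correct and follows essentially the same route as the paper: establish that $p^{-1}(W')$ is Stein, extend $f$ to the ambient space, average over $K$ to obtain a $G$-invariant extension, and then descend via \eqref{eq:Ginv} and \eqref{eq:str}. The only difference is packaging---the paper obtains Steinness from Heinzner's complexification theorem and the averaging step from Roberts' theorem on coherent $G$-sheaves, whereas you argue both points directly (affineness of $p$ for the former, explicit Haar integration plus the totally-real argument for the latter); your version is more self-contained, the paper's more citation-efficient, but the mathematics is the same.
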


If we restrict the claim to the ``normal category'' i.e.\ to weakly normal spaces (or to the weak normalization of a space $\wt S$), then the claim of the theorem follows in an elementary way:  A reduced complex space is called weakly normal if continuous holomorphic functions on the regular locus of an open subset extend holomorphically to the given open subset. The condition is equivalent to saying that any continuous, weakly holomorphic function (i.e.\ holomorphic outside a nowhere dense analytic subset) is holomorphic. We use the notation $\wh S \to S$ for the weak normalization (cf.\ the book by L.~Kaup and B.~Kaup \cite[\S 72]{K-K}). 	If $S$ is weakly normal, then so is $\wt S$. The analytic GIT-quotient $\wt S\san2 G$ exists and by \cite[\S 72.4]{K-K} again is weakly normal. Since $G$-invariant holomorphic functions on $\wt S$ descend to continuous functions that are holomorphic on the locus where $p$ is of maximal (differential rank), the structure sheaf of the weak normalization of $\wt S\san2 G$ satisfies
$$
\cO_{\wh{\wt S \san2 G}} (W') = \cO_{\wh{\wt S}}(p^{-1}(W'))^G
$$
for Stein open subspaces $W'\subset W$.

Now the general case of Theorem~\ref{th:invfct} will be proven.
\begin{proposition}
  Let $W'\subset V\s2 G$ be a $K$-invariant Stein open set, e.g.\ let $W'=p(U)$, where $U\subset V$ is an open ball around $0\in V$. Then $p^{-1}(W')\subset V$ is a Stein open subset.
\end{proposition}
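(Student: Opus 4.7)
The plan is to exhibit a continuous strictly plurisubharmonic exhaustion function on $p^{-1}(W')$ and then invoke the Grauert--Narasimhan characterization of Stein manifolds. Since $W'$ is Stein, fix a continuous plurisubharmonic exhaustion $\psi: W' \to [0,\infty)$, translating if necessary so that $\psi \ge 0$. On the ambient vector space $V$ the squared norm $m^2$ is smooth and strictly plurisubharmonic, with complex Hessian equal to the given hermitian inner product. Set $\Phi := m^2 + \psi \circ p$ on $p^{-1}(W')$. Because $p$ is holomorphic, $\psi\circ p$ is continuous and plurisubharmonic, so $\Phi$ is continuous and strictly plurisubharmonic.

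The main point to check is that $\Phi$ is an exhaustion of $p^{-1}(W')$. Since both summands are nonnegative, $\{\Phi\le c\}\subset \{m^2\le c\}\cap p^{-1}(\{\psi\le c\})$. The first factor is a closed ball in $V$, hence compact. To conclude compactness of the intersection inside $p^{-1}(W')$ it suffices to show that $p^{-1}(\{\psi\le c\})$ is closed in $V$, equivalently that $\{\psi\le c\}$ is closed in $V\s2 G$. Since $\{\psi\le c\}$ is relatively compact in $W'$ by the exhaustion property, any sequence in it admits a subsequence convergent in $W'$; by continuity of $\psi$ the limit still lies in $\{\psi\le c\}$, and by Hausdorffness of $V\s2 G$ this subsequential limit coincides with any other limit taken in $V\s2 G$. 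Hence $\{\psi\le c\}$ is sequentially closed, and therefore closed, in $V\s2 G$. Consequently $\{\Phi\le c\}$ is closed and bounded in $V$, thus compact, and it is contained in $p^{-1}(W')$.

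Having produced a continuous strictly plurisubharmonic exhaustion on the complex manifold $p^{-1}(W')$, Grauert's solution of the Levi problem (in the guise of Narasimhan's theorem) implies that $p^{-1}(W')$ is Stein. The hypothesis that $W'$ be $K$-invariant is not used in the argument itself; it is recorded because in subsequent applications $p^{-1}(W')$ must be $G$-saturated in $V$ and carry the restricted $G$-action. The only delicate point is the closedness of the sublevel set $\{\psi\le c\}$ in $V\s2 G$, which requires $\psi$ to be chosen continuous rather than merely upper semicontinuous---a harmless restriction, since every Stein space admits a continuous strictly plurisubharmonic exhaustion.
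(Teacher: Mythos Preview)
Your argument is correct, and it takes a genuinely different route from the paper. The paper gives a one-line proof: it invokes Heinzner's complexification theorem \cite{he}, identifying $p^{-1}(W')$ with the complexification $W'^{\mathbb C}$ in Heinzner's sense, which is Stein. By contrast, you construct a continuous strictly plurisubharmonic exhaustion directly, by adding to the squared norm $m^{2}$ on $V$ the pullback $\psi\circ p$ of a continuous plurisubharmonic exhaustion of $W'$, and then appeal to the solution of the Levi problem.

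Your approach is more elementary and entirely self-contained: it uses nothing beyond the standard characterization of Stein manifolds and the observation that compact sublevel sets of $\psi$ are closed in the ambient affine variety $V\s2 G$. It also makes transparent that the $K$-invariance hypothesis plays no role in establishing Steinness (and indeed, since $K\subset G$ acts trivially on $V\s2 G$, every subset of $V\s2 G$ is $K$-invariant; the hypothesis is presumably meant as a reminder that the saturated preimage carries the relevant group action, or is a vestige of Heinzner's framework where one starts from a $K$-invariant Stein domain and passes to its $G$-complexification). The paper's approach, while terser, situates the result in the general theory of complexifications of $K$-spaces, which is the natural context for the GIT constructions that follow.
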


\noindent
  The {\em proof} is an immediate consequence of Heinzner's complexification  theorem -- it follows that the set $p^{-1}(W')$ is equal to the complexification $W'^{\C}$ in the sense of \cite{he}. \qed
\medskip

In \cite{r} Roberts studied coherent sheaves $\mathcal F$ on a space $V$ with $G$-actions. These give rise to sheaves $p^G_*\mathcal F$ on $\VsG$ defined by
$$
(p^G_*\mathcal F)(W'):= \cF(p^{-1}(W')^G\text{ for Stein open subsets }W'
$$
(cf.\ also the article by Heinzner and Loose \cite{h-l}).

Roberts constructs an averaging operator for coherent $\cO_{\wt U}$-modules $\cF$.
$$
L: \cF_{\wt U}(p^{-1}(W'))  \longrightarrow \cF_{\wt S\san2 G}(W')^G
$$
by means of integration over the compact group $K$ that is a projection onto $G$-invariant sections.  By \cite[Theorem~3.1]{r} the coherence of $p_*\cO_{\wt S}^G$ follows, and \eqref{eq:Ginv},\eqref{eq:str} imply the proof of Theorem~\ref{th:invfct}. \qed

\section{Slice theorem for analytic GIT-quotients}
Snow showed an analytic slice theorem for the action of a reductive group on a Stein space together with the existence of a categorical quotient in \cite{sn}. Here the situation is simpler than in the general case.

The action of a reductive group $G$ on an affine space $V$ is already given, and restricted to a closed subset of an open (Stein) subspace. We need the direct application to spaces of the form $\wt S\subset \wt U\subset V$.

A slice for the action on an affine space is given by Luna's theorem \cite{Lu} (see also the exposition by Drézet \cite{d}).

\subsection{Algebraic case}
Let the reductive group $G$ act on an affine space (or vector space) $V$. If the $G$-orbit of a point $s\in V$ is closed, then by Matsushima's theorem the stabilizer subgroup $G_s$ is reductive again.

Let $Y$ be a $G_s$-invariant affine subspace containing $s$, and $G\times Y \to V$ be the canonical $G$-equivariant map, where the left action of $G$ on itself defines an action on $G\times Y$. The left action of $G_s$ on $G$ together with the action of $G_s$ on $Y$ is given by $\gamma\cdot(g,w)=(g\gamma^{-1}, \gamma w)$ for $\gamma \in G_s$, $g\in G$, and $w\in Y$. It defines a GIT-quotient  $(G\times Y)\s2 G_s $ for which the notation $G\times_{G_s}Y$ is common usage. Now $G\times_{G_s}Y\s2 G \simeq Y\s2 G_s$. Altogether there is a commutative diagram
$$
\xymatrix{
	G\times Y \s2  G_s =   G\times_{G_s} Y   \ar[d]\ar[r]^{\hspace{12mm}\psi}  &  V_0 \subset V \ar[d]\\
	Y\s2  G_s \simeq ( G \times_{G_s}Y)\s2 G \ar[r]^{\hspace{10mm}\ol\psi} & V_0\s2 G \subset V\s2 G
}
$$
where $\psi$ and $\ol\psi$ are induced by the action of $G$. The spaces $V_0$ and $V_0\s2 G$ are the images of $\psi$ and $\overline\psi$ respectively in the sense below.

\begin{theorem*}[{Luna, \cite{Lu}\cite[Thm.\ 5.3, 5.4]{d}}] Let $G$ be a reductive group acting on an affine variety $V$ with closed orbit $G\cdot s$. Then there exists a slice $Y$ through $s$, i.e. a $G_s$-invariant affine subvariety such that the map $\psi$ is \'etale onto an open, $G$-invariant subvariety $V_0\subset V$, and such that $\overline\psi$ is \'etale onto $V_0\s2 G$. If $V$ is smooth at $s$ so is $Y$.
	\vskip0mm
	Let $V'\subset G\times_{G_s}Y$ be a closed $G$-invariant subvariety. Then there exists a closed $G_s$-invariant subvariety $Y'$ of $Y$ such that $V'= G\times_{G_s} Y'$.
\end{theorem*}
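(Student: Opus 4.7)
The plan is to follow Luna's original strategy, which rests on two pillars: Matsushima's theorem giving reductivity of $G_s$ because $G\cdot s$ is closed, and Luna's \emph{fundamental lemma} on spreading étaleness across $G$-saturated neighbourhoods via GIT-quotients.

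First I would construct the candidate slice. Since $G$ is reductive and $G\cdot s$ is closed, Matsushima's theorem makes $G_s$ reductive and hence its algebraic representations completely reducible. After $G$-equivariantly embedding $V$ in a finite-dimensional representation, I would choose a $G_s$-invariant linear complement $N$ to the tangent space $T_s(G\cdot s)$ inside $T_s V$, and take $Y$ to be a $G_s$-invariant locally closed affine subvariety through $s$ with $T_s Y = N$; smoothness of $V$ at $s$ then forces smoothness of $Y$ at $s$. A direct tangent-space computation at $[e,s]$ shows that the canonical $G$-equivariant morphism $\psi\colon G\times_{G_s} Y\to V$ is étale there, since its differential maps the two summands isomorphically onto $T_s(G\cdot s)\oplus N = T_s V$ after quotienting out the $G_s$-action that defines the balanced product.

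The crux will be propagating étaleness from the single point $[e,s]$ to an entire $G$-saturated open set, and simultaneously extracting étaleness of $\overline\psi$ on GIT-quotients. Here I would invoke Luna's fundamental lemma: if $\varphi\colon X\to Z$ is a $G$-equivariant morphism of affine $G$-varieties which is étale at a point $x$ with closed orbit, and the induced map $\overline\varphi\colon X\s2 G\to Z\s2 G$ is étale at $[x]$, then after replacing $Z\s2 G$ by a suitable open neighbourhood of $[x]$ and $X$, $Z$ by its preimages, $\varphi$ becomes étale and $X\cong X\s2 G\times_{Z\s2 G} Z$. Applied to $\psi$ this simultaneously produces the open $G$-saturated $V_0\subset V$ on which $\psi$ is étale and yields étaleness of $\overline\psi$ onto $V_0\s2 G$. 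Establishing the fundamental lemma is the main technical obstacle: its proof combines a Nakayama-type descent, flatness of étale morphisms, and the key reductive-group fact that taking $G$-invariants commutes with flat base change along the GIT-quotient map.

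Finally, for the second assertion, given a closed $G$-invariant subvariety $V'\subset G\times_{G_s}Y$, I would set $Y' := V'\cap Y$ under the slice embedding $Y\hookrightarrow G\times_{G_s}Y$, $y\mapsto [e,y]$. Then $Y'$ is closed and $G_s$-invariant in $Y$, and since every $G$-orbit in $G\times_{G_s}Y$ meets $\{e\}\times Y$ in precisely one $G_s$-orbit, the $G$-saturation of $Y'$ recovers all of $V'$, giving $V' = G\times_{G_s}Y'$.
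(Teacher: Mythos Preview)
The paper does not give its own proof of this statement: it is quoted verbatim as Luna's slice theorem with references to \cite{Lu} and \cite[Thm.\ 5.3, 5.4]{d}, and is used as a black box in the subsequent application (Theorem~\ref{th:appLuna}). So there is nothing in the paper to compare your argument against.

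That said, your outline is the standard route to Luna's theorem and is broadly correct. Two small comments. First, your statement of the fundamental lemma assumes étaleness of $\overline\varphi$ at $[x]$ as a hypothesis, but in Luna's actual argument this is part of the conclusion: one shows $\psi$ is étale at $[e,s]$, then uses the fundamental lemma (which needs in addition that $\psi$ is injective on the closed orbit through $[e,s]$ and that the stabilizers at $[e,s]$ and $s$ agree) to obtain both the saturated étale neighbourhood and the étaleness of $\overline\psi$. Second, in the final paragraph your identification $Y' = V'\cap Y$ is correct, but ``every $G$-orbit meets $\{e\}\times Y$ in precisely one $G_s$-orbit'' is exactly the statement that the map $Y/G_s \to (G\times_{G_s}Y)/G$ is a bijection, which is immediate from the construction of the balanced product; you might make explicit that closedness of $G\times_{G_s}Y'$ in $G\times_{G_s}Y$ follows because the projection $G\times Y \to G\times_{G_s}Y$ is a quotient by a reductive group, hence a good quotient, so that the image of the closed $G_s$-invariant set $G\times Y'$ is closed.
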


Note that the dimension of the isotropy subgroups of $G$ is not assumed to be constant.

\subsection{Application to analytic GIT-quotients}
For any analytic application the property {\em étale} will be replaced by {\em locally biholomorphic}, and the (analytic) structure sheaves of analytic GIT-quotients such as $p: \wt S \to \wt S\san2 G$ of reduced complex spaces consist of holomorphic $G$-invariant functions on pull-backs under the projection $p$.

Let $\wt S \hookrightarrow \wt U \hookrightarrow V$ be given as in Section~\ref{se:anGIT} where $V$ is a smooth affine space. In particular, referring to Definition~\ref{de:anGIT}, let $s\in \wt S$ be a point with a closed $G$-orbit, and let $Y$ be a slice through $s$ for the action of $G$ on $V$ according to the slice theorem. Without loss of generality let $\wt S\subset V_0$.

In the process the space $\wt U$ is replaced by a smaller saturated open subspace that contains $s$ and that is of the form $p^{-1}(W)$. Accordingly the algebraic slice $Y$ will be replaced by $\wt Y = \wt U\cap Y$. The analytic quotients $(G\times \wt Y))\san2 G_s$, $\wt Y\san2 G_s$, and $\wt Y\san2 G$ exist, and the structure sheaves consist of $G_s$- and $G$-invariant functions according to Section~\ref{se:anGIT}. Altogether there exist analytic quotients and locally biholomorphic, surjective maps $\psi$ and $\ol \psi$
\begin{equation}\label{eq:arr}
\begin{array}{ccc}
(G\times Y)\s2 G_s &\stackrel{\psi}{\longrightarrow} & V_0\\
\cup && \cup\\
(G\times \wt Y))\san2 G_s & \longrightarrow &\wt U \\
\downarrow p && \downarrow p \\
\wt Y\san2 G_s & \to & \wt U \san2 G\\
\cap&&\cap\\
Y\s2 G_s & \stackrel{\ol \psi}{\longrightarrow}&V_0 \s2 G.
\end{array}
\end{equation}
With the above notation the following theorem holds.
\begin{theorem}[Application to analytic GIT-quotients]\label{th:appLuna}
Let the reductive group $G$ act on a smooth (or weakly normal) affine space $V$, and as above let $S\subset U\subset V$ be a $G$-invariant analytic subset with $0\in S$. Then for points $0\neq s\in \wt S$ the following holds: Let the orbit $G\cdot s \subset V$ be closed. Then
\begin{itemize}
  \item[(i)] There exists the slice $\wt S\cap \wt Y$ for the action of $G_s$ on $\wt S$, namely a locally biholomorphic map from $(G\times (\wt S \cap \wt Y))\san2 G_s$ to $\wt S$.
  \item[(ii)] The action of $G$  defines an analytic GIT-quotient
  $$
  (G\times(\wt S \cap \wt Y) \san2 G_s)\san2 G  \simeq (\wt S \cap \wt Y)\san2 G_s,
  $$
  and a locally biholomorphic map to $\wt S\san2 G$
\end{itemize}
$$
\begin{array}{ccc}
G\times(\wt S\cap \wt Y)\san2 G_s & \stackrel{\psi}{\longrightarrow} & \wt S\\
\downarrow p && \downarrow p \\
(\wt S \cap \wt Y) \san2 G_s & \stackrel{\ol\psi}{\longrightarrow} & \wt S\san2 G .
\end{array}
$$
\end{theorem}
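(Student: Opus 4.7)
The plan is to reduce to Luna's algebraic slice theorem for the ambient action of $G$ on $V$ and transfer the conclusion to $\wt S$ using its $G$-invariance together with Theorem~\ref{th:invfct}. Luna's theorem applied to $V$ at the closed orbit $G\cdot s$ furnishes a $G_s$-invariant affine subspace $Y\subset V$ through $s$, a $G$-invariant open subvariety $V_0\ni s$, and étale surjective maps
\[
\psi:G\times_{G_s}Y\longrightarrow V_0,\qquad \ol\psi:Y\s2 G_s\longrightarrow V_0\s2 G.
\]
After shrinking $\wt U$ to a smaller saturated Stein neighborhood $p^{-1}(W)\subset V_0$ of $s$ and setting $\wt Y:=Y\cap\wt U$, the analytic GIT-quotients $(G\times\wt Y)\san2 G_s= G\times_{G_s}\wt Y$, $\wt Y\san2 G_s$ and $\wt Y\san2 G$ all exist in the sense of Section~\ref{se:anGIT}.

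The key analytic step is the identity
\[
\psi^{-1}(\wt S)\;=\;G\times_{G_s}(\wt Y\cap\wt S).
\]
To prove it, pull back $\wt S$ along $G\times\wt Y\to V_0$, $(g,y)\mapsto g\cdot y$: a pair $(g,y)$ maps to $\wt S$ if and only if $g\cdot y\in\wt S$, and by the $G$-invariance of $\wt S$ from Theorem~\ref{th:glob} this is equivalent to $y\in\wt Y\cap\wt S$. Descent along $G\times\wt Y\to G\times_{G_s}\wt Y$ yields the claim. Since $V$ is smooth at $s$, the étale map $\psi$ is a local biholomorphism of the ambient analytic spaces; restricted to the analytic subset $G\times_{G_s}(\wt Y\cap\wt S)$, which is cut out by the pull-back of the ideal of $\wt S$, it remains a local biholomorphism onto $\wt S\cap V_0$. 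This proves~(i); the weakly normal case goes through unchanged since weak normalization commutes with local biholomorphisms.

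For~(ii), I form the analytic GIT-quotient by $G$ of both sides of the restricted $\psi$. On the left,
\[
\bigl(G\times_{G_s}(\wt Y\cap\wt S)\bigr)\san2 G\;\simeq\;(\wt Y\cap\wt S)\san2 G_s,
\]
because the $G$-invariant holomorphic functions on $G\times_{G_s}Z$ correspond to the $G_s$-invariant holomorphic functions on $Z$; at the level of structure sheaves this identification is delivered by Theorem~\ref{th:invfct} applied to both reductive groups. The induced map $\ol\psi:(\wt Y\cap\wt S)\san2 G_s\to\wt S\san2 G$ is obtained by restricting Luna's étale map $Y\s2 G_s\to V_0\s2 G$ to the corresponding analytic subsets, and is therefore a local biholomorphism.

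The main obstacle lies in~(ii): one has to commute the two operations ``pass to the $G$-invariant analytic subset $\wt S$'' and ``form the analytic GIT-quotient''. In the algebraic category this is exactly the second part of Luna's theorem, but analytically it relies on the coherence of $p^G_*\cO_{\wt S}$ and on the exact sequences \eqref{eq:str}--\eqref{eq:oSG}, i.e.\ on the Roberts-type Theorem~\ref{th:invfct}. Once that is granted, the algebraic slice machinery transfers essentially verbatim to the analytic setting of restricted group actions, modulo the standard shrinking arguments that place $s$ inside a Stein saturated neighborhood of the form $p^{-1}(W)\subset V_0$.
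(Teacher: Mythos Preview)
Your proposal is correct and follows essentially the same approach as the paper: both restrict Luna's algebraic slice for the ambient action on $V$ to $\wt S$ via the $G$-invariance of $\wt S$, the key computation being $\psi^{-1}(\wt S)=G\times_{G_s}(\wt Y\cap\wt S)$ from the observation that $g\cdot y\in\wt S$ forces $y\in\wt S$. The paper's proof is terser and simply invokes diagram~\eqref{eq:arr} together with the remark that the analytic GIT-quotients carry the sheaves of invariant functions, whereas you spell out more explicitly the role of Theorem~\ref{th:invfct} in matching structure sheaves for part~(ii); this is a difference in level of detail, not in strategy.
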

\begin{proof}
	Consider the multiplication map $G\times \wt Y\to \wt Y$. If $g\cdot y\in \wt S$, then obviously $y\in \wt S$ so that $y\in \wt S \cap \wt Y$, hence $\psi^{-1}(\wt S)= G\times (\wt Y\cap \wt S)\san2 G_s$. This proves (i). The same argument and \eqref{eq:arr} yield (ii). Observe that the analytic GIT-quotient are images under a projection map equipped with the sheaves of $G_s$-invariant functions.
\end{proof}

\section{Deformations of holomorphic vector bundles}
We will fix some notation now. Let  $X$ be a compact complex manifold, which will be equipped later with a \ka\ form $\omega_X$. A holomorphic family $(\cE_s)_{s\in S}$ of holomorphic vector bundles on $X$ parameterized by a (reduced) complex space $S$ is given by a holomorphic vector bundle $\cE$ over $X\times S$ such that $\cE_s=\cE|X\times\{s\}$. Deformations $\xi$ of a fixed holomorphic vector bundle $E$ are defined over complex spaces $(S,s_0)$ with a distinguished point $s_0\in S$ (or a ``pointed'' complex space). Such an object consists of a holomorphic family $\cE$ over $S$ together with an isomorphism $\chi: E \stackrel{\sim}{\longrightarrow}\cE_{s_0}$. Accordingly an isomorphism of two deformations of a bundle $E$ over the same space is an isomorphism of holomorphic families that is compatible with the given identifications of central fibers with $E$. Consequently, a change of $\chi$ usually generates a non-isomorphic deformation (unless $\chi$ can be extended to the whole family).

During most arguments that involve deformations a parameter space has to be replaced by a neighborhood of the distinguished point. In this sense space germs are being used as parameter spaces for deformations.

Let $\beta:(R,r_0)\to (S,s_0)$ be a holomorphic map of spaces with distinguished points, and $\xi$ a deformation of $E$ over $(S,s_0)$. The base change map assigns to $\xi$ a deformation $\beta^*\xi$ over $(R,r_0)$, which is given by the pull-back $\beta^*\cE$ of the corresponding family together with the induced isomorphism of $E$ and the fiber $\beta^*\cE|X\times \{r_0\}=\cE_{s_0}$.

{\em Infinitesimal deformations} of a bundle $E$ are by definition deformations over the ``double point'' $D=(0,\C[t]/(t^2))\subset (\C,0)$, and tangent vectors $v\in T_{s_0}S$ correspond exactly to holomorphic maps $\underline{v}:D \to S$ sending the underlying point to $s_0\in S$. The space of isomorphism classes of infinitesimal deformations of $E$ can be identified with the complex vector space $H^1(X, End(E))$. In this set-up, given a deformation $\xi$,  the \ks\ map
$$
\rho:T_{s_0}S\to H^1(X, End(E))
$$
sends a tangent vector $v$ to the pull-back $\underline{v}^*\xi$, which is the ``restriction of the given deformation to the direction given by the tangent vector $v$''.

A deformation $\xi$ is called {\em complete}, if any other deformation is of the form $\beta^*\xi$, where $\beta$ is a base change map -- it is called {\em semi-universal}, if in addition the derivative $T_{r_0}\beta$ at the distinguished point is uniquely determined.

More precisely, let $E$ be a polystable holomorphic vector bundle on \break $(X,\omega_X)$, and $\xi$ a semi-universal deformation of $E$ over a complex analytic space $(S,s_0)$. In order to have a halfway simple notion deformations are restricted to the category of reduced spaces first, and $(S,s_0)$ will always denote a {\em reduced} complex space. Concerning the \ks\ map the restriction to reduced complex analytic spaces yields values in a linear subspace of $V=H^1(X,End(E))$.

\section{Application to deformations of polystable vector bundles}\label{se:appdefvb}
Let again $(X,\omega_X)$ be a compact \ka\ manifold. For a polystable vector bundle the base of a semi-universal deformation $\xi$ in general contains positive dimensional local analytic subsets, namely parts of $G$-orbits of semistable (but not polystable points). Also, in general, stable points give rise to local analytic sets, where the fibers are isomorphic. This fact can be illustrated as follows. If the tangent cohomology $H^q(X,End(E))$ vanishes for $q\geq 2$ then for $E=\cE_0$ polystable and $\cE_s$ stable for some $s$ near $s_0$ we have $0< h^0(X,\cE_{s_0})-h^0(X,\cE_s) =h^1(X,\cE_{s_0})-h^1(X,\cE_s)$. Now $\xi$ induces a complete deformation of $\cE_s$ (``openness of versality''), but the dimension of the smooth space $S$ is too large for universality at $s\neq s_0$ so that the deformation $\xi$ restricted to a positive dimensional local analytic set has to be trivial. This situation needs to be handled by an (analytic) GIT approach.

\begin{theorem}[{\cite{bu-sch}[Theorem~1]}]\label{th:Psi}
	For any polystable vector bundle on a compact \ka\ manifold there exists a semi-universal deformation with a parameter space $(S,s_0)$ such that the action of the group of holomorphic automorphisms $Aut(E)$ on the tangent space of $S$ at $s_0$ extends to an action on the space germ of $S$ at $s_0$ that is compatible with the holomorphic family of vector bundles.
\end{theorem}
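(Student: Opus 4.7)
The plan is to carry out the Kuranishi construction of a semi-universal deformation of $E$ in a manner that is equivariant with respect to $G := Aut(E)$. Since $E$ is polystable on the compact \ka\ manifold $X$, the Kobayashi--Hitchin correspondence furnishes a \he\ metric $h$ on $E$; by Matsushima's theorem $K := Aut(E)\cap U(E,h)$ is a maximal compact subgroup of $G$ with $G=K^\C$. Using harmonic theory with respect to $h$, identify $V := H^1(X, End(E))$ and $H^2(X, End(E))$ as finite-dimensional $G$-representations, where $G$ acts by conjugation on $End(E)$.

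First I would construct, by the standard Kuranishi recursion, a convergent power series $a(v) = v + \sum_{k\ge 2}a_k(v)$ with each $a_k$ a degree-$k$ homogeneous polynomial on $V$ taking values in $\ker\bar\partial^* \subset \Omega^{0,1}(X, End(E))$, solving
\begin{equation*}
\bar\partial\, a(v) + \tfrac{1}{2}[a(v),a(v)] \;=\; \Phi(v) \;\in\; H^2(X, End(E)).
\end{equation*}
The germ $S := (\Phi^{-1}(0),0)\subset V$ is then the base of a semi-universal deformation $\xi=(\cE,\chi)$, whose total space $\cE$ is the underlying smooth bundle of $E$, pulled back to $X\times S$ and equipped with the family of integrable Dolbeault operators $\bar\partial + a(v)$.

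Each ingredient of the recursion ($\bar\partial$, the bracket, the Green's operator, the harmonic projector $H$, and $\bar\partial^*$) is $K$-equivariant, so $a$, $\Phi$, and hence $S$ are $K$-invariant. To promote the $K$-action on $(S,0)$ to a $G$-action, one uses that $G$ already acts on the set of isomorphism classes of deformations of $E$ by altering the marking $\chi \mapsto \chi\circ g^{-1}$; by semi-universality each $g\in G$ yields a germ of base-change map $\beta_g:(S,0)\to (S,0)$ whose differential at $0$ is the prescribed linear action of $g$ on $V$. Reductivity of $G$ — by decomposing $V$ and $H^2(X, End(E))$ into $G$-isotypic components and making a $G$-equivariant refinement of the Kuranishi slice (adjusting the $a_k$ within the freedom permitted by the recursion) — rigidifies the family $\{\beta_g\}_{g\in G}$ into a genuine holomorphic $G$-action on the germ $(S,0)$ that restricts to the linear action on $V=T_0 S$.

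Once $a$ is chosen $G$-equivariantly, the smooth automorphism of $E\times S$ combining the pointwise action of $g$ on $E$ with its linear action on $S$ carries $\bar\partial + a(v)$ to $\bar\partial + g\, a(v)\, g^{-1} = \bar\partial + a(g\cdot v)$, and therefore defines a holomorphic isomorphism of families $\cE \to (\mathrm{id}_X\times g)^*\cE$ that reduces to $g:E\to E$ on the central fiber. This delivers the required compatibility of the $G$-action on $(S,s_0)$ with the semi-universal family. The main obstacle is the equivariant rigidification in the third paragraph: naive Kuranishi theory, built from a $K$-invariant metric, yields only a $K$-equivariant slice, and the passage from an action on isomorphism classes of deformations to a genuine holomorphic $G$-action on the germ is precisely where the reductivity of $G = Aut(E)$ (and thus the polystability hypothesis) is essentially invoked.
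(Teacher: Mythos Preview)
Your overall strategy coincides with the construction the paper sketches (citing \cite{bu-sch}): build the Kuranishi slice using the Hermite--Einstein metric supplied by polystability, so that $S=\Psi^{-1}(0)\subset V=H^1(X,End(E))$ with $\Psi$ an $Aut(E)$-equivariant holomorphic map. Your first two paragraphs and your final paragraph are essentially what is done there.

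The weak point is your third paragraph. You correctly observe that the ingredients of the recursion are $K$-equivariant, but then you propose to upgrade the $K$-action on $(S,0)$ to a $G$-action by an abstract route: invoke semi-universality to produce base-change germs $\beta_g$, and then ``rigidify'' them into a genuine action by adjusting the $a_k$ via isotypic decompositions. This is both vague and unnecessary. Semi-universality determines only $T_0\beta_g$, not $\beta_g$ itself, so the family $\{\beta_g\}$ need not satisfy $\beta_{gh}=\beta_g\circ\beta_h$; turning it into an action is exactly the delicate step, and ``adjusting the $a_k$ within the freedom permitted by the recursion'' is not a proof.

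The point you are missing is that no adjustment is needed: the Kuranishi map $\alpha\mapsto a(\alpha)$ and the obstruction map $\Psi$ are \emph{holomorphic}, and $G=K^{\mathbb C}$ acts holomorphically by conjugation on $V$, on $H^2(X,End(E))$, and on $\Omega^{0,1}(X,End(E))$. For fixed $\alpha$ the map
\[
g\;\longmapsto\; a(g\cdot\alpha)\;-\;g\cdot a(\alpha)
\]
is holomorphic on $G$ and vanishes on the totally real, maximal-dimensional subgroup $K$; hence it vanishes on all of $G$. The same argument applies to $\Psi$. Thus $a$ and $\Psi$ are $G$-equivariant outright, $S=\Psi^{-1}(0)$ is $G$-invariant (in the restricted sense of Definition~\ref{de:rgp}, since $\Psi$ is only defined on $U$), and the $G$-action on the germ $(S,0)$ is simply the restriction of the \emph{linear} $G$-action on $V$. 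Your final paragraph then goes through verbatim, because $g\cdot a(\alpha)=a(g\cdot\alpha)$ is now an identity rather than something to be arranged. This is precisely how the paper records the construction: $\Psi$ is stated to be $Aut(E)$-equivariant from the outset, with the action on $S$ being the restricted linear one.
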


The space $H^2(X,End(E))$ which contains the obstructions to extending the bundle $E$ to a family over a whole neighborhood of $0$ in  \break $H^1(X,End(E))$ is equipped with a compatible action of $G=Aut(E)$.

In \cite{bu-sch} the space $S$ was realized as an analytic subspace of an open neighborhood $U$ of $s_0=0$ in the space of infinitesimal deformations \break $H^1(X, End(E))$ of the form
\begin{equation}\label{eq:Psi}
S=\Psi^{-1}(0)\subset U \text{ where }\Psi: U \to H^2(X, End(E))
\end{equation}
was constructed in \cite{bu-sch} as an $Aut(E)$-equivariant map in the restricted sense of Definition~\ref{de:rgp}. For a suitable chosen open set $U$ all fibers of points from $S$ correspond to semistable vector bundles.

\textit{
	In other words, the action of $G=Aut(E)$ on $V$ gives rise to a restricted action on $(S,U)$ in the sense of Section~\ref{se:anGIT0}.
}
\begin{theorem}\label{th:localmain}
  Let a polystable vector bundle $E$ on a compact \ka\ manifold $(X,\omega_X)$ be given. Then a semi-universal deformation of $E$ over a space $0\in S\subset U\subset V$  together with the reduced action of $G=Aut(E)$ on the space $V$ of infinitesimal deformations gives rise to an analytic GIT-quotient $\wt S\san2 G$ which is a closed reduced subspace of an open subset  $\wt U\san2 G$ of the GIT-quotient $V\s2 G$.
\end{theorem}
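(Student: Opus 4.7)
The proof is essentially an assembly of the machinery developed earlier in the paper. The plan is to verify that the concrete data $(S,U,V)$ arising from deformation theory, equipped with the $G=Aut(E)$-action, fits the framework of Section~\ref{se:anGIT0}, and then to invoke Theorems~\ref{th:glob}, \ref{th:spr}, and \ref{th:invfct} in sequence.

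First I would start from Theorem~\ref{th:Psi} together with the realization \eqref{eq:Psi}: the parameter space $S$ of a semi-universal deformation of $E$ is given as $\Psi^{-1}(0)\subset U$ inside a neighborhood $U$ of $0$ in $V=H^1(X,End(E))$, with $\Psi:U\to H^2(X,End(E))$ a $G$-equivariant map in the restricted sense. I would choose $U$ to be a convex open ball around $0$ with respect to a hermitian norm on $V$ invariant under the maximal compact subgroup $K\subset G$; this choice provides simultaneously the convexity demanded in the proof of Theorem~\ref{th:spr} and the $K$-invariance appearing in Remark~\ref{re:actU}. The restricted action property of Definition~\ref{de:rgp} for the pair $(S,U)$ is then immediate: if $s\in S$ and $g\in G$ are such that $g\cdot s\in U$, then $\Psi(g\cdot s)=g\cdot\Psi(s)=0$, hence $g\cdot s\in S$.

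Next, Theorem~\ref{th:glob} produces the $G$-invariant closed analytic subset $\wt S=\bigcup_{g\in G}g\cdot S\subset \wt U=\bigcup_{g\in G}g\cdot U\subset V$ with $\wt S\cap U=S$, inheriting reducedness from $S$. Theorem~\ref{th:spr} then applies and asserts that the restriction $p|\wt S:\wt S\to W=p(\wt U)$ is semi-proper, whence by the semi-proper mapping theorem its image $p(\wt S)$ is a closed analytic subset of the open set $\wt U\san2 G\subset V\s2 G$. By Definition~\ref{de:anGIT} this image is precisely the analytic GIT-quotient $\wt S\san2 G$; Theorem~\ref{th:invfct} supplies its structure sheaf as the sheaf of $G$-invariant holomorphic functions on pull-backs under $p$, completing the identification.

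The substantive content of the theorem has been delegated to earlier results: Theorem~\ref{th:Psi} for the existence of a $G$-equivariant $\Psi$ (and hence of the restricted action on $(S,U)$), and Theorem~\ref{th:spr} whose proof uses Kempf-Ness to find minimum-norm representatives lying already in $U$. At the level of the present theorem there is no genuine obstacle; the only point requiring a moment's care is choosing $U$ with the joint properties of $K$-invariance and convexity needed by all the cited results, which is automatic once one fixes a $K$-invariant hermitian inner product on $V$ and takes $U$ to be an open ball about the origin.
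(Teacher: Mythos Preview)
Your proposal is correct and follows essentially the same route as the paper: the paper does not give a separate proof block for this theorem but instead observes (just before the statement and in the Remark after Definition~\ref{de:anGIT}) that Theorem~\ref{th:Psi} together with \eqref{eq:Psi} yields a restricted action on $(S,U)$, after which Theorems~\ref{th:glob} and \ref{th:spr} directly produce the analytic GIT-quotient. Your explicit verification that $\Psi(g\cdot s)=g\cdot\Psi(s)=0$ and your care in choosing $U$ as a $K$-invariant ball simply make these implicit steps visible; the invocation of Theorem~\ref{th:invfct} is not strictly needed for the statement as phrased but is a natural complement.
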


The following correspondence will also be crucial:

\begin{theorem}[{\cite[Thm.~3]{bu-sch}}]\label{th:stab}
A class $s\in S=\Psi^{-1}(0)\subset H^1(X,End(E))$
is (poly)stable with respect to the action of $Aut(E)$
if and only if the corresponding bundle $\cE_s$
is (poly)stable with respect to the given \ka\ metric.
\end{theorem}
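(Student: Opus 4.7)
The plan is to reduce Theorem~\ref{th:stab} to the combination of the Kempf-Ness theorem (recalled in Section~\ref{se:anGIT0}) and the Hitchin-Kobayashi correspondence on compact \ka\ manifolds. The bridge is to identify the finite-dimensional moment map on $V=H^1(X,End(E))$ with the restriction of the Atiyah-Bott moment map on the space of unitary connections on the smooth bundle underlying $E$.

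First I would equip $V$ with the \ka\ structure given by the $L^2$-inner product on harmonic $End(E)$-valued $(0,1)$-forms with respect to a \he\ metric $h$ on $E$, whose existence is guaranteed by the polystability of $E$. The unitary automorphism group $K=Aut(E,h)$ acts on $V$ by isometries, and its complexification acts effectively as (the quotient by scalars of) $G=Aut(E)$. The $K$-action on $V$ carries a canonical moment map $\mu$, and by Kempf-Ness a class $s\in V$ is polystable for $G$ iff $G\cdot s\cap\mu^{-1}(0)\neq\emptyset$, and stable iff the isotropy $G_s$ is in addition finite.

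Second I would identify $\mu$ with the infinitesimal version of the Atiyah-Bott moment map on the space $\cA^{1,1}$ of integrable $\ol\pt$-operators, whose zero set is the \he\ locus. The construction in \cite{bu-sch} realises $S=\Psi^{-1}(0)\subset V$, after exponentiation, as a $K$-equivariant local complex-gauge slice through $\ol\pt_E$ on which the residual holomorphic action is that of $G$. Therefore the restriction of the gauge-theoretic moment map to this slice agrees with $\mu$ evaluated on harmonic representatives, so that $\mu(g\cdot s)=0$ for some $g\in G$ if and only if the complex-gauge orbit through $\ol\pt_E+a(s)$ contains a \he\ connection, i.e.\ $\cE_s$ admits a \he\ metric.

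The equivalence then assembles as follows. For polystability: $s$ polystable $\Leftrightarrow$ $G\cdot s\cap\mu^{-1}(0)\neq\emptyset$ (Kempf-Ness) $\Leftrightarrow$ $\cE_s$ admits a \he\ connection $\Leftrightarrow$ $\cE_s$ is polystable (Hitchin-Kobayashi). For stability, the stabiliser $G_s$ is canonically identified, via the slice, with the image of $Aut(\cE_s)$ in $Aut(E)$ modulo scalars; the finiteness of $G_s$ is therefore equivalent to $\cE_s$ being simple, and combined with polystability this is the stability of $\cE_s$. The main obstacle is the second step: verifying that the \ks\ slice $S$ produced in \cite{bu-sch} is orthogonal to the gauge orbit in the appropriate sense, so that its intrinsic finite-dimensional moment map is the honest restriction of the Atiyah-Bott map. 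Once this compatibility is established, both directions of the theorem follow from the two classical pillars without further work.
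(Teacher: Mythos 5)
The paper itself offers no proof of Theorem~\ref{th:stab}: it is imported verbatim from \cite[Thm.~3]{bu-sch}, so the comparison can only be with the argument in that reference. Your reduction to ``Kempf--Ness plus Hitchin--Kobayashi plus a moment-map identification'' captures the right philosophy, but the bridge you defer to the end is not a compatibility to be checked --- it is the entire content of the theorem, and as literally formulated it is false. The canonical moment map for the linear action of $K=Aut(E,h)$ on $V=H^1(X,End(E))$ is a quadratic expression in the harmonic representative $\alpha$ alone, with values in the dual of the \emph{finite-dimensional} Lie algebra of $K$; the Atiyah--Bott moment map evaluated at the integrable semi-connection $\ol\pt_0+a''$, $a''=\alpha+\ol\pt^*_0\beta$, involves the correction $\beta=O(\alpha^2)$ and takes values in the dual of the full unitary gauge algebra. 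The two agree only to second order at the origin and only after projecting the latter to $\mathfrak{k}^*$, and vanishing of that projection is far weaker than vanishing of the Atiyah--Bott map itself. Consequently neither implication is formal: a zero of the finite-dimensional $\mu$ on $G\cdot s$ does not produce a \he\ metric on $\cE_s$ (one must still solve the Hermite--Einstein equation and show the solution stays in the slice neighbourhood), and a \he\ metric on $\cE_s$ does not obviously yield a point of the finite-dimensional orbit $G\cdot s$ where $\mu$ vanishes, since the relevant complex gauge orbit is infinite-dimensional and the \he\ connection need not lie in the chart. Closing this gap --- via estimates on $\Psi$ and a comparison of one-parameter degenerations in $Aut(E)\cong\prod GL(n_i,\C)$ with Jordan--H\"older degenerations of $\cE_s$ --- is precisely the analytic work carried out in \cite{bu-sch}.

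Two further points. Your treatment of the stable case silently uses the identification $G_s\cong Aut(\cE_s)$ (modulo the scalars acting trivially); this is itself a nontrivial result, recorded in the present paper as a citation of \cite[Theorem 4.7]{bu-sch}, and not a formal consequence of the slice construction. And since \cite{bu-sch} is itself a paper about the polystable Hitchin--Kobayashi correspondence, invoking that correspondence for the fibres $\cE_s$ as a black box is legitimate only from the vantage point of the present paper (where Donaldson--Uhlenbeck--Yau may be taken as known); as a reconstruction of the proof of \cite[Thm.~3]{bu-sch} it would risk circularity.
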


For the property of semi-stability such an analogy does not exist: Let $V^{ss}$ denote the semi-stable locus. In general there may be points $s\in S\backslash V^{ss}$ that correspond to semi-stable bundles (cf.\ \cite[\S 9]{bu-sch}). A change of the affine space can turn non-semistable points into semistable ones.

In Section~\ref{se:WP} we will need the definition of $\Psi$. We will give a few details from the construction in \cite{bu-sch}. Let a polystable bundle $(E,\ol\pt_0)$ be given, equipped with a \he\ metric. According to \cite[Prop.~2.6]{bu-sch} any $End(E)$-valued, harmonic $(0,1)$-form $\alpha$ from $U$ is assigned to in a unique way a form
$$
a''= \alpha + \ol\pt^*_0 \beta
$$
with estimates, where $\beta$ is a certain $End(E)$-valued (0,2)-form such that
$$
\ol\pt_0 a'' + a''\we a''
$$
is $\ol\pt^*_0$-closed. Let $\Pi$ be the harmonic projection on the space of (0,2)-forms. Then
$$
\Psi(\alpha)= \Pi(a''\we a'').
$$
The map $\Psi$ is holomorphic, and the integrability condition for $a''$ is satisfied, if and only if $\Psi(\alpha)=0$, and the set $S=\Psi^{-1}(0)\subset U\subset  H^1(X,End(E))$ is the parameter space of a semi-universal deformation.

\begin{remark}\label{re:semi-c}
The family of semi-connections $a''$ over $U$ together with the flat semi-connection in horizontal direction is a semi-connection on $E\times U$ over $X\times U$. Over $X\times S$ this semi-connection is integrable.
\end{remark}

\subsection{Change of affine space}
In principle the special role of the origin in $V$ with respect to the group action is not intended. In this affine setting points that are unstable have the same status as points that are semistable but not polystable. One can use the following auxiliary affine space.

Set $V_0=\C\times V$.
\begin{lemma}
A given action of $G$ on $V$ is extended to $V_0$ by the identity on $\C$, i.e.\ $g\cdot (\zeta,v):= (\zeta,g\cdot v)$. Then the following holds (again after dividing by the ineffectivity kernel). Let $(\zeta,v)\in V_0$.
  \begin{itemize}
    \item[(i)] $G_{(\zeta,v)}=G_v$
    \item[(ii)] $G\cdot (\zeta,v) = \{\zeta\}\times G\cdot v$
    \item[(iii)] Let $\zeta\neq 0$, $v\neq 0$, then $(\zeta,v)$ is $G$-(poly)stable, if and only $v$ has this property. Let $\zeta\neq 0$, then $(\zeta,0)$ is $G$-polystable, and all $(\zeta,v)$ are $G$-semistable points.
  \end{itemize}
\end{lemma}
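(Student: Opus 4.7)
The plan is to unwind the definitions step by step, exploiting the fact that the extended action fixes the $\C$-factor pointwise, so every orbit in $V_0$ sits inside the affine hyperplane $\{\zeta\}\times V$ and is essentially a copy of the corresponding orbit in $V$.

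First I would dispose of (i) and (ii) by direct computation from the formula $g\cdot(\zeta,v)=(\zeta,g\cdot v)$. For (i), $g\cdot(\zeta,v)=(\zeta,v)$ holds if and only if $g\cdot v=v$, so $G_{(\zeta,v)}=G_v$. For (ii), the orbit is $\{(\zeta, g\cdot v):g\in G\}=\{\zeta\}\times G\cdot v$, hence its closure in $V_0$ equals $\{\zeta\}\times\overline{G\cdot v}$; this last identification is the geometric observation on which all of (iii) rests.

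Next I would prove (iii) by going through the three stability notions in turn. For the case $\zeta\neq 0$, the closure $\{\zeta\}\times\overline{G\cdot v}$ does not meet the origin $(0,0)\in V_0$, so $(\zeta,v)$ is semistable for every $v\in V$; in particular $(\zeta,0)$ is semistable, and since its orbit is the single point $\{(\zeta,0)\}$ it is automatically polystable. Under the additional assumption $v\neq 0$, polystability of $(\zeta,v)$ is closedness of $\{\zeta\}\times G\cdot v$ in $V_0$, which is manifestly equivalent to closedness of $G\cdot v$ in $V$, i.e.\ polystability of $v$. Finally stability is polystability plus finite isotropy, and since (i) gives $G_{(\zeta,v)}=G_v$ the stable case is immediate.

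I do not expect a serious obstacle. The only subtle point is keeping straight the convention fixed earlier in the paper that $0\in V$ counts as polystable (so that ``$(\zeta,0)$ polystable'' is not a degenerate statement), and being careful to restrict to $V_0\setminus\{0\}$ in the semistability definition — which is harmless here because $\zeta\neq 0$ excludes the origin of $V_0$ automatically.
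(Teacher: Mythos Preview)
Your argument is correct and is precisely the straightforward unwinding of the definitions that the statement invites. The paper itself gives no proof of this lemma---it is stated as self-evident, followed only by the remark that $\C[V_0]^G=\C[\zeta]\otimes\C[V]^G$---so your proposal simply supplies the elementary details the authors leave implicit.
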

Observe that $\C[V_0] = \C[\zeta]\otimes \C[V]$, and accordingly $\C[V_0]^G= \C[\zeta]\otimes \C[V]^G$ so that $V_0\s2 G= \C\times (V\s2 G)$.

Nevertheless in the sequel the original spaces will be used.

\subsection{Analytic GIT-quotient for parameter spaces of holomorphic vector bundles}
Given a semi-universal deformation of a polystable vector bundle, the parameter space $S$ can be chosen so that the triple
$$
(S\hookrightarrow U \hookrightarrow V)
$$
satisfies the assumptions of Theorem~\ref{th:localmain}.

Eventually all points of $\wt S\san2 G$ correspond to holomorphic vector bundles or equivalence classes of holomorphic vector bundles as in the case of algebraic GIT-spaces.

\begin{proposition}\label{pr:trfree}
	Let $\cE \to  X\times Q $ be a holomorphic family of vector bundles over a compact \ka\ manifold $(X,\omega_X)$ parameterized by an irreducible, reduced space  $Q$.
	Then there exists an  analytic subset $A\subset Q$ such that all bundles $\cE_s$ for $s\in Q\backslash A$ possess a universal deformation.
\end{proposition}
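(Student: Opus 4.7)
The plan is to take $A$ to be the non-simple locus, namely
\[
A \;:=\; \{\,s\in Q \;:\; h^0(X, End(\cE_s)) \geq 2\,\},
\]
and to show separately that $A$ is analytic and that every bundle in its complement admits a universal deformation.

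For the analyticity of $A$, I would apply Grauert's semi-continuity theorem to the coherent sheaf $End(\cE)$ on $X\times Q$. Since $\cE$ is locally free, $End(\cE)$ is flat over $Q$; and since $X$ is compact, the projection $X\times Q\to Q$ is proper. Grauert's theorem then yields that $s\mapsto h^0(X, End(\cE_s))$ is upper semi-continuous on $Q$, and more precisely that the superlevel sets $\{s:h^0(End(\cE_s))\geq k\}$ are closed analytic subsets of $Q$; taking $k=2$ exhibits $A$ as such. Since the identity endomorphism is always a global section, $h^0\geq 1$ holds universally, so $s\in Q\setminus A$ forces $h^0(X,End(\cE_s))=1$ and $Aut(\cE_s)=\C^*$; that is, $\cE_s$ is simple.

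It remains to show that a simple bundle admits a universal deformation. A semi-universal deformation $\xi$ of $\cE_s$ over some germ $(S,s)$ exists by the Forster--Knorr theory, and upgrading it to a universal deformation requires uniqueness of the base change $\beta$, not merely of its derivative $T_{r_0}\beta$ built into semi-universality. The obstruction to this rigidity is precisely the action of $Aut(\cE_s)$ on the parameter space germ $(S,s)$ produced by Theorem~\ref{th:Psi}. When $Aut(\cE_s)=\C^*$ this action is trivial: multiplication by $\lambda\in\C^*$ on $\cE_s$ extends tautologically to fibrewise multiplication on any family, yielding an automorphism of every deformation that covers the identity of the base. Triviality of the action then forces $\beta$ to be unique.

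The main obstacle is this final step, translating triviality of the $\C^*$-action into uniqueness of the germ of $\beta$. The cleanest route is a Schlessinger/Rim-style formal argument that a semi-universal deformation is universal precisely when the relative automorphism functor of the family is trivial, which is automatic once $Aut(\cE_s)=\C^*$. Alternatively, for the concrete setup of this paper one can inspect Section~\ref{se:appdefvb}: two identifications $\chi$ of $\cE_s$ with a central fiber differ by a scalar when $\cE_s$ is simple, and the associated integrable semi-connections $a''$ from Remark~\ref{re:semi-c} then differ only by a $\C^*$-gauge transformation, so the resulting families are isomorphic as deformations of $\cE_s$.
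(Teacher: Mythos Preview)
Your argument is essentially the paper's own: both take $A$ to be the non-simple locus, detected via endomorphisms of $\cE$---the paper through the support of $pr_*End_0(\cE)$, you through upper semi-continuity of $h^0(End(\cE_s))$---and both then rely on the classical fact that a simple bundle admits a universal deformation, which the paper leaves implicit and you spell out. One small caveat: your appeal to Theorem~\ref{th:Psi} is slightly off, since that result is formulated for \emph{polystable} bundles and nothing in the proposition guarantees that a simple $\cE_s$ is polystable; this is harmless, however, because your Schlessinger/Rim alternative (together with the elementary observation that scalar automorphisms extend tautologically to any family) does the job without it.
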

\begin{proof}
Let $\cF$ be the sheaf of trace-free holomorphic endomorphisms of $\cE$. Then the above locus $A$ is contained in the support of $pr_*\cF$, where $pr:X\times S\to S$ denotes the canonical projection.
\end{proof}

\begin{remark}\label{re:pslocus}
The set of polystable points that are not stable in the analytic GIT-quotient $\wt S\san2 G$ is a closed analytic subspace.
\end{remark}
\begin{proof}
  Given a deformation of a polystable bundle over a space $(S,0)$ the set $A\subset S $ from Proposition~\ref{pr:trfree} is $G$-invariant in the restricted sense and gives rise to a GIT-subspace $\wt A\san2 G\subset \wt S\san2 G$, whose points represent polystable, non-stable vector bundles.
\end{proof}

The following fact will be needed. Let $\Psi$ and $S$ be given as in \eqref{eq:Psi}.
\begin{proposition}[{\cite{bu-sch}[Corollary 4.6]}]\label{pr:Cor46}
	Let $p>2\,{\mathrm dim}(X) $. Then the (pointwise) quotient %
    $S/G$ can be identified with $\mathcal F/\mathcal G$, where $\mathcal{F}$ denotes the space of $L^p_1$ integrable connections near the given  \he\ connection $d_0$ on $E$, and $\mathcal G$ denotes the corresponding complexified gauge group: parameters with isomorphic fibers are actually contained in the same $G$-orbit.
\end{proposition}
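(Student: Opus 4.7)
The plan is to exhibit the bijection
$$
\Phi : S/G \to \cF/\cG, \qquad s \mapsto [\,\ol\pt_0 + a''(s)\,],
$$
where $a''(s) = \alpha + \ol\pt_0^* \beta(\alpha)$ is the semi-connection associated to a harmonic representative $\alpha \in V = H^1(X, End(E))$ of $s$, as recalled in the paragraph after Theorem~\ref{th:stab}. The map is well-defined: by Remark~\ref{re:semi-c} the semi-connection $\ol\pt_0 + a''(s)$ is integrable precisely when $\Psi(\alpha) = 0$, so $\Phi$ lands in $\cF$; and if $s' = g \cdot s$ with $g \in G = Aut(E)$, then by Theorem~\ref{th:Psi} the extended $G$-action is implemented by complexified gauge transformations on the total family, giving $[\ol\pt_0 + a''(s)] = [\ol\pt_0 + a''(s')]$ in $\cF/\cG$.

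Surjectivity of $\Phi$ onto a neighborhood of $[\ol\pt_0] \in \cF/\cG$ is a Kuranishi-type slice statement. Because $p > 2\,\mathrm{dim}(X)$, the Sobolev embedding $L^p_1 \hookrightarrow C^0$ and the Banach-algebra property for pointwise multiplication in $L^p_1$ both hold, so the complexified gauge group $\cG$ (modeled on $L^p_2$-transformations) acts smoothly on the affine $L^p_1$-space of semi-connections. Given any integrable $\ol\pt_0 + b'' \in \cF$ with $b''$ sufficiently small, an implicit-function-theorem argument applied to the Coulomb condition—namely that $g \cdot b''$ be of the form $\alpha + \ol\pt_0^* \beta$ with $\alpha$ harmonic—produces a gauge transformation $g$ near the identity putting $g \cdot b''$ into the slice. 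Integrability then forces $\alpha \in S = \Psi^{-1}(0)$, so $[\ol\pt_0 + b''] \in \Phi(S/G)$.

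The main step, and the expected obstacle, is injectivity: if $s, s' \in S$ and there exists $g \in \cG$ with $g \cdot (\ol\pt_0 + a''(s)) = \ol\pt_0 + a''(s')$, then $s' \in G \cdot s$. First, I would apply elliptic regularity to the first-order equation $\ol\pt_0 g = g\, a''(s) - a''(s')\, g$ to upgrade the $L^p_2$-transformation $g$ to a smooth one. Then, taking $s, s'$ close enough to $0$, I would argue that $g$ may be chosen close to the isotropy $G = Aut(E, \ol\pt_0) \subset \cG$ of $\ol\pt_0$, and that uniqueness in the slice construction of the previous paragraph forces $g$ to lie in $G$ modulo a transformation stabilizing $\ol\pt_0 + a''(s)$. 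The difficulty here is that at a polystable but non-stable $E$ the group $G$ is positive-dimensional, so no naive rigidity applies; one must combine the $G$-equivariance of the slice afforded by Theorem~\ref{th:Psi} with the explicit transversality built into the form $a'' = \alpha + \ol\pt_0^* \beta$, which separates the finite-dimensional $G$-orbit directions along the slice from the $\cG$-directions transverse to it. Making this separation rigorous amounts to showing that the induced local map $S/G \to \cF/\cG$ is injective, which is precisely the content of the proposition.
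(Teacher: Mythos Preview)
The paper does not prove this proposition at all: it is quoted verbatim as Corollary~4.6 of \cite{bu-sch} and used as a black box. So there is no ``paper's own proof'' to compare your argument against; any comparison would have to be with the companion paper \cite{bu-sch}, which is outside the present text.

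That said, your sketch is the right architecture---well-definedness via the equivariance of Theorem~\ref{th:Psi}, surjectivity via a Coulomb-gauge slice and the implicit function theorem in $L^p_1$ with $p>2\dim X$, and injectivity as the nontrivial step. But the injectivity paragraph ends in a circle: after naming the correct ingredients (elliptic regularity for $g$, proximity of $g$ to $G$, transversality of the slice $\alpha+\ol\pt_0^*\beta$ to the $\cG$-orbit), you write that ``making this separation rigorous amounts to showing that the induced local map $S/G\to\cF/\cG$ is injective, which is precisely the content of the proposition.'' That is not an argument; it is a restatement of what remains to be shown. The actual work is to prove that if two slice elements $a''(s)$ and $a''(s')$ are $\cG$-equivalent, then after modifying by an element of the stabilizer of $\ol\pt_0+a''(s)$ the gauge transformation lies in $G$. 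This requires a genuine uniqueness statement for the Coulomb gauge \emph{modulo} $G$, typically obtained by analyzing the linearization of the gauge-fixing map and showing its kernel is exactly the Lie algebra of $G$ (i.e.\ $\ker(\ol\pt_0)\cap\ker(\ol\pt_0^*)$ on $End(E)$), together with a quantitative inverse-function-theorem estimate. You have not supplied that step.
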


Let $S=\Psi^{-1}(0) \subset U$ be the base of a semi-universal deformation of a polystable bundle. Two points $s', s''$ of $S$ will be identified, $s'\sim s''$, if the fibers $\cE_{s'}$ and  $\cE_{s''}$ are isomorphic. By Proposition~\ref{pr:Cor46} this is exactly the case if $s'=g\cdot s''$ for some $g\in G$. Conversely, if $s', s'' \in \wt S$ then there exist $g',g''\in G$ such that $g'\cdot s', g''\cdot s''\in S$. If these points of $S$ are equivalent, $s'= g \cdot  s''$ holds for some $g\in G$
so that the topological spaces $\wt S/G$ and $S/\!\sim$ can be identified.

After applying the methods from Section~\ref{se:anGIT0} to the reduced action of $G$ to this standard situation the following holds.

\begin{lemma}\label{le:gitiso1}
	Let $\xi$ be a semi-universal deformation of a polystable bundle over $(S,0)$, and $\eta:(S,0)\to (S,0)$ a holomorphic map (of germs) such that $\eta^*\xi \simeq \xi$. Then $\eta$ is an isomorphism, whose derivative at the origin is the identity. The map $\eta$ defines a map  $S_1 \to S_2$, where $S_1, S_2\subset S$ are neighborhoods of the origin  that extends to a map $\wt S_1 \to \wt S_2$ and descends to an isomorphism of analytic GIT-quotients
	$$
	{\wt S_1  \san2 G} \stackrel{\sim}{\longrightarrow} {\wt S_2  \san2 G}.
	$$
\end{lemma}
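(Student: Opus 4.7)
The plan is to deduce the three conclusions of the lemma -- identity derivative, germ isomorphism, and the descent to an isomorphism of analytic GIT-quotients -- in sequence, with the first two following from the semi-universality of $\xi$ and the third from Proposition~\ref{pr:Cor46}.

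I would first observe that both $\mathrm{id}_S$ and $\eta$ serve as base-change maps $(S,0)\to(S,0)$ realizing a deformation isomorphic to $\xi$, so the uniqueness-of-derivative clause in the definition of a semi-universal deformation forces $T_0\eta = T_0(\mathrm{id}_S) = \mathrm{id}_{T_0 S}$. To upgrade this to the statement that $\eta$ is a germ isomorphism, I would pass to the induced endomorphism $\eta^{\#}$ of the Noetherian analytic local ring $\mathcal O_{S,0}$. Since $T_0\eta$ is the identity on $T_0 S$, $\eta^{\#}$ is the identity on $\mathfrak m/\mathfrak m^2$; as $\eta^{\#}$ is a ring homomorphism, a short induction on $n$ shows it acts as the identity on every graded quotient $\mathfrak m^n/\mathfrak m^{n+1}$, hence on the associated graded ring, whence it is an automorphism of the $\mathfrak m$-adic completion. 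Faithful flatness of completion together with the standard fact that a surjective endomorphism of a Noetherian local ring is an automorphism then yields that $\eta^{\#}$, and therefore $\eta$, is an isomorphism. Choosing representatives gives open neighborhoods $S_1, S_2 \subset S$ of the origin together with a biholomorphism $\eta|_{S_1}\colon S_1 \xrightarrow{\sim} S_2$.

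For the descent to analytic GIT-quotients, the isomorphism $\eta^{*}\xi \simeq \xi$ yields $\mathcal E_{\eta(s)} \simeq \mathcal E_s$ for every $s \in S_1$, and Proposition~\ref{pr:Cor46} then guarantees that $\eta(s)$ and $s$ lie in a common $G$-orbit of $\wt S$. Hence $p \circ \eta = p|_{S_1}$, and in particular $p(S_1) = p(S_2)$, so as open subsets of $\wt S\san2 G$ one has $\wt S_1\san2 G = \wt S_2\san2 G$. An extension of $\eta$ is given by the equivariant prescription $\wt\eta(g\cdot s) := g\cdot \eta(s)$ for $s\in S_1$ and $g\in G$; this is only well-defined modulo $G$, but any two choices at the same point differ by an element of $G$ and therefore collapse after projecting via $p$. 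The resulting descended map $\bar\eta\colon \wt S_1\san2 G \to \wt S_2\san2 G$ is the identity on the common open set $p(S_1)$, which is manifestly an isomorphism.

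The main obstacle, as usual in deformation-theoretic arguments of this kind, is the middle step: bridging the algebraic information $T_0\eta = \mathrm{id}$ to the statement that $\eta$ is an isomorphism of (possibly singular) analytic germs. The Nakayama-plus-completion argument above is standard but relies crucially on Noetherianity of $\mathcal O_{S,0}$ and on faithful flatness of the completion map; since $S$ is reduced but typically singular, one should verify that these tools apply without modification (they do). Once the germ isomorphism is in hand, Proposition~\ref{pr:Cor46} makes Step~3 essentially formal, because $\eta$ acts as the identity on $G$-orbits.
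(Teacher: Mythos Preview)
Your argument is correct and follows the same skeleton as the paper's proof for the first two assertions: both derive $T_0\eta=\mathrm{id}$ from the uniqueness clause in semi-universality, and both then conclude that $\eta$ is a germ isomorphism. You supply the Noetherian/completion argument explicitly, whereas the paper simply asserts this as a standard consequence of semi-universality.

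The genuine difference is in the descent step. The paper argues that $p\circ\eta$ is $G$-invariant (via Proposition~\ref{pr:Cor46}) and then invokes Theorem~\ref{th:invfct} to descend it to a holomorphic map on the analytic GIT-quotient; the isomorphism property is then read off. Your route is more direct: from $\cE_{\eta(s)}\simeq\cE_s$ and Proposition~\ref{pr:Cor46} you observe that $\eta(s)$ lies in the $G$-orbit of $s$, hence $p\circ\eta=p|_{S_1}$ outright, so the induced map on quotients is literally the identity on $p(S_1)=\wt S_1\san2 G=\wt S_2\san2 G$. This bypasses Theorem~\ref{th:invfct} entirely and is arguably cleaner for this particular lemma. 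The paper's approach, on the other hand, is the template reused in Lemmas~\ref{le:betadesc} and~\ref{le:alphadesc}, where the two GIT-quotients are genuinely different and the descended map is not the identity, so Theorem~\ref{th:invfct} is unavoidable there.

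One small point: neither your argument nor the paper's actually produces a well-defined holomorphic extension $\wt\eta:\wt S_1\to\wt S_2$ as literally stated in the lemma; your equivariant prescription is, as you note, only well-defined modulo $G$. This is a looseness in the lemma's phrasing rather than a defect in your proof---the operative conclusion is the isomorphism of GIT-quotients, which you establish.
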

\begin{proof}

The property of $\xi$ being a semi-universal deformation implies that $\eta$ is an isomorphism with $T_0\eta = id_{T_0S}$. Represent $\eta$ by an isomorphism denoted by the same letter $\eta:S_1\to S_2$ of suitable neighborhoods of the origin. Note that the construction of the analytic GIT-quotient contains the fact that the maps $\wt S_j \to \wt S_j\san2 G$ etc.\ are still surjective when restricted to $S_j \subset \wt S_j$. Due to the construction of $\eta$ and Proposition~\ref{pr:Cor46} the map $\eta$ is constant on closures of $G$-orbits, in particular it is $G$-invariant. The claim now follows by Theorem~\ref{th:invfct}.
\end{proof}

Let again a semi-universal deformation $\xi$ of a polystable bundle over $(S,0)$ be given, and the induced action of $G$ on $(S\hookrightarrow U \hookrightarrow V)$. Let $s_0\in S$ correspond to a polystable bundle $\cE_{s_0}$.

Let $R\subset U_R\subset V_R$ denote the base of a semi-universal deformation $\zeta$ of ${\cE}_{s_0}$ with restricted group action of $H=Aut({\cE}_{s_0})$ on $(R\hookrightarrow U_R \hookrightarrow V_R)$ for $V_R = H^1(X,End({\cE}_{s_0}))$.

Now there exist base change maps of space germs  $\alpha: (S,s_0) \to (R,0)$ and $\beta : (R,0) \to (S,s_0)$ such that $\alpha^* \zeta = \xi_{s_0}$ and $\beta^*\xi_{s_0} = \zeta$, because of completeness and semi-universality of $\xi_{s_0}$ and $\zeta$ respectively. By Lemma~\ref{le:gitiso1} the map $\alpha\circ \beta$ induces an isomorphism of neighborhoods of the distinguished point which descends to an isomorphism (near the distinguished point) of  $\wt R\san2 H$.

\begin{lemma}\label{le:betadesc}
	The map $\beta$ descends to a holomorphic map
	$$
	\ol{\beta}:  {\wt R\san2 H}\to { \wt S\san2 G}.
	$$
\end{lemma}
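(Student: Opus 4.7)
The strategy is to construct an $H$-invariant holomorphic lift $\tilde\beta : \wt R \to \wt S \san2 G$ of $p_S\circ\beta$ and then invoke Theorem~\ref{th:invfct} to descend it to the GIT-quotient $\wt R\san2 H$.

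The key step is to show that $p_S\circ\beta : R \to \wt S\san2 G$ is constant on intersections of $H$-orbits with $R$. To this end, suppose $r_1,r_2\in R$ satisfy $r_2=h\cdot r_1$ for some $h\in H$ (viewing $R$ inside $\wt R$). By Theorem~\ref{th:Psi} the restricted $H$-action on $R$ is compatible with the family $\zeta$, and the globalization in Theorem~\ref{th:glob} extends this compatibility to $\wt R$, so $\zeta_{r_1}\cong \zeta_{r_2}$ as holomorphic bundles on $X$. But $\zeta\cong\beta^*\xi_{s_0}$, hence $\zeta_{r_i}\cong \cE_{\beta(r_i)}$, and consequently $\cE_{\beta(r_1)}\cong \cE_{\beta(r_2)}$. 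Proposition~\ref{pr:Cor46} now yields $\beta(r_2)=g\cdot\beta(r_1)$ for some $g\in G$, whence $p_S(\beta(r_1))=p_S(\beta(r_2))$.

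Having this, I would extend to $\wt R$ using $\wt R=\bigcup_{h\in H}h\cdot R$ from Theorem~\ref{th:glob}: for $r\in\wt R$, pick any $h\in H$ with $h\cdot r\in R$ and set $\tilde\beta(r):=p_S(\beta(h\cdot r))$. Well-definedness follows from the previous step applied to any two such choices; $H$-invariance is built in; and locally, where $h$ can be kept constant, $\tilde\beta$ is a composition of holomorphic maps (the action $r\mapsto h\cdot r$, then $\beta$, then $p_S$), hence holomorphic. Continuity into the Hausdorff space $\wt S\san2 G$ forces $\tilde\beta$ to be constant on $H$-orbit closures, so it descends set-theoretically to a map $\bar\beta:\wt R\san2 H\to\wt S\san2 G$.

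Finally, to confirm $\bar\beta$ is holomorphic I would work locally: embed a neighborhood of the image in $\wt S\san2 G\subset V\s2 G$ via $G$-invariant polynomial coordinates on $V$. The components of $\tilde\beta$ are then $H$-invariant holomorphic functions on open subsets of $\wt R$; by Theorem~\ref{th:invfct} these are precisely the pull-backs of holomorphic functions on the corresponding opens of $\wt R\san2 H$, and they assemble into the desired $\bar\beta$. The main obstacle is the very first step: it is precisely the identification of bundle isomorphism classes with ambient $G$-orbits furnished by Proposition~\ref{pr:Cor46} that forces the (a priori non-equivariant) base-change map $\beta$ to descend to the analytic GIT-quotients.
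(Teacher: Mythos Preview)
Your proof is correct and follows essentially the same route as the paper: compose $\beta$ with the projection $p_S$, observe via compatibility of the $H$-action with the family and Proposition~\ref{pr:Cor46} that the result is $H$-invariant, extend over $\wt R$, and invoke Theorem~\ref{th:invfct} to descend. Your version is in fact more explicit than the paper's, particularly in spelling out the extension from $R$ to $\wt R$ and the well-definedness check, which the paper leaves implicit.
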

\begin{proof}
It is already known that $\beta$ gives rise to a holomorphic map $\wt\beta:R\to \wt S\san2 G$. Points with isomorphic fibers are mapped to points with isomorphic fibers so that the topological space $R/H$ is mapped homeomorphically onto an open subset of the topological space $S/G$, and points corresponding to polystable bundles i.e.\ polystable points with respect to the group action of $H$ go to polystable points with respect to $G$. In particular the holomorphic map $\wt \beta$ is constant on $G$-orbits. Any holomorphic function on $\wt S\san2 G$ is pulled back under $\ol \beta$ to an $H$-invariant function on $\wt R$. The claim follows again from Theorem~\ref{th:invfct}.
\end{proof}

In order to descend the map $\alpha: (S,s_0) \to (R,0) $ to GIT-quotients, first a slice $Q\subset S$ through $s_0$ for the action of $G$ on $S$ is chosen (and $\alpha$ is restricted to $Q$) so that $G\times Q\san2 G_{s_0} \subset \wt S \san2 G$ is an open neighborhood of the image of $s_0$.

\begin{lemma}\label{le:alphadesc}
	The map $\alpha: (S,s_0) \to (R,0) $ descends to a holomorphic map
	$$
	{\wt S\san2 G} \supset {\wt Q\san2 G_{s_0}} \stackrel{\ol{\alpha}}{\longrightarrow} {\wt R\san2H}
	$$
	where $\wt Q\san2 G_{s_0}\subset \wt S\san2 G$ is an open neighborhood of the image of $s_0$.
\end{lemma}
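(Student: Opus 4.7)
My plan mirrors the structure of the proof of Lemma~\ref{le:betadesc}, adapted to the slice $Q$ and the smaller group $G_{s_0}$. First I would represent the germ map $\alpha$ by a holomorphic map on a neighborhood and restrict it to $Q$; the isomorphism $\alpha^*\zeta \simeq \xi|_{(Q,s_0)}$ then says that the fiber of $\zeta$ over $\alpha(q)$ is isomorphic (as a holomorphic vector bundle on $X$) to the fiber of $\xi$ over $q$, for every $q\in Q$. The composite $p_R \circ \alpha : Q \to \wt R \san2 H$ is holomorphic, and the key preparatory step is to verify that it is $G_{s_0}$-invariant in the restricted sense: given $g\in G_{s_0}$ and $q\in Q$ with $g\cdot q\in Q$, Theorem~\ref{th:Psi} says the $G$-action on $S$ lifts compatibly to the family $\xi$, so the fibers of $\xi$ over $q$ and $g\cdot q$ are isomorphic. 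Combining this with $\alpha^*\zeta \simeq \xi|_Q$, the fibers of $\zeta$ over $\alpha(q)$ and $\alpha(g\cdot q)$ are isomorphic too, and Proposition~\ref{pr:Cor46} places these points in the same $H$-orbit of $\wt R$, so $p_R(\alpha(q)) = p_R(\alpha(g\cdot q))$.

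Next I would extend $p_R\circ\alpha$ to a $G_{s_0}$-invariant holomorphic map $\tilde\alpha : \wt Q \to \wt R \san2 H$ by exploiting the $G_{s_0}$-action on $V$: for $q\in \wt Q = \bigcup_{g\in G_{s_0}}g\cdot Q$, choose any $g\in G_{s_0}$ with $g\cdot q\in Q$ and set $\tilde\alpha(q) := p_R(\alpha(g\cdot q))$. Well-definedness is exactly the $G_{s_0}$-invariance established above, and holomorphy is local, since on a neighborhood of $q$ the map $\tilde\alpha$ agrees with the holomorphic composition $p_R \circ \alpha \circ L_g$, where $L_g$ denotes left translation by $g$. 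Finally, Theorem~\ref{th:invfct} provides the descent: pulling back a holomorphic function on a Stein open $W' \subset \wt R \san2 H$ via $\tilde\alpha$ yields a $G_{s_0}$-invariant holomorphic function on an open subset of $\wt Q$, which is exactly a local section of $\cO_{\wt Q \san2 G_{s_0}}$. This defines the holomorphic map $\ol\alpha : \wt Q \san2 G_{s_0} \to \wt R \san2 H$; that $\wt Q \san2 G_{s_0}$ is an open neighborhood of the image of $s_0$ in $\wt S \san2 G$ is the content of Theorem~\ref{th:appLuna}.

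The main obstacle I expect is the verification of the restricted $G_{s_0}$-invariance of $p_R\circ\alpha$, essentially because $\alpha$ itself need not be equivariant for any natural homomorphism $G_{s_0}\to H$: the two automorphism groups $G=Aut(E)$ and $H=Aut(\cE_{s_0})$ are a priori different, and the semi-universality of $\zeta$ is compatible with the $H$-action only up to isomorphism of fibers. The argument succeeds precisely because only invariance modulo $H$-orbits is needed in the target GIT-quotient, and Proposition~\ref{pr:Cor46} converts isomorphism of fibers into $H$-orbit equality; once that is in hand, the extension to $\wt Q$ and descent through $p:\wt Q \to \wt Q \san2 G_{s_0}$ are formal consequences of the slice theorem and Theorem~\ref{th:invfct}.
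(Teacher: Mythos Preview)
Your proposal is correct and follows the same approach as the paper, whose proof of this lemma reads in its entirety: ``Once the analytic slice is chosen, the proof is the same as the proof of Lemma~\ref{le:betadesc}.'' You have simply unpacked that reference: constancy on $G_{s_0}$-orbits via isomorphism of fibers and Proposition~\ref{pr:Cor46}, extension to $\wt Q$, and descent through Theorem~\ref{th:invfct}, with Theorem~\ref{th:appLuna} supplying the identification of $\wt Q\san2 G_{s_0}$ as an open neighborhood in $\wt S\san2 G$.
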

\begin{proof}
	Once the analytic slice is chosen, the proof is the same as the proof of Lemma~\ref{le:betadesc}.
\end{proof}
Note that the analytic slices are not shown to be parameter spaces of semi-universal deformations -- identification takes place only after passing to the analytic GIT-quotient.

\begin{remark}
  By \cite[Theorem 4.7]{bu-sch} it follows that $H=G_{s_0}$.
\end{remark}

\begin{proposition}\label{pr:alpbet}
Let $\xi$ be a semi-universal deformation with parameter space $(S,0)$, let $s_0\in S$ be a polystable point, denote by $\xi_{s_0}$ the induced deformation of the fiber at $s_0$, and let $\zeta$ be a semi-universal deformation of $\xi_{s_0}$ over a base space $(R,0)$. Let  $\alpha: (S,s_0) \to (R,0)$ and $\beta : (R,0) \to (S,s_0)$ be holomorphic maps of induced space germs such that $\alpha^* \zeta = \xi_{s_0}$ and $\beta^*\xi_{s_0} = \zeta$.  Let $Q$ be a slice for the action of $G$ on $S$ through $s_0$ so that $Q\san2 G_{s_0}$ can be identified with a neighborhood of the image of $s_0$ in $S\san2 G$.

Let $G$ and $H$ be the automorphism groups of the central fiber and  of the fiber at $s_0$ resp. Then the base change morphisms descend to isomorphisms of analytic GIT-quotients near the points $0$ and $s_0$ after replacing the respective spaces by neighborhoods of the distinguished points:
$$
\xymatrix{
{R\san2 H} \ar[r]^{\ol{\beta}}_\sim & {Q\san2 G_{s_0}} \ar[r]^{\ol{\alpha}}_\sim & {R\san2 H}
}
$$
\end{proposition}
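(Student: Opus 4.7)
My plan is to show that each of the two compositions $\ol\alpha\circ\ol\beta$ and $\ol\beta\circ\ol\alpha$ coincides with the identity germ on its respective analytic GIT-quotient; once both are identity morphisms, $\ol\alpha$ and $\ol\beta$ are automatically mutually inverse isomorphisms near the distinguished points. The inputs I intend to use are the functoriality built into Lemmas~\ref{le:betadesc} and~\ref{le:alphadesc} (so that descent commutes with composition), the identification of fiber-isomorphism with orbit-equivalence supplied by Proposition~\ref{pr:Cor46}, and the reducedness of the analytic GIT-quotients implicit in Theorem~\ref{th:invfct}.

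The key step will be to compute both compositions on underlying points. For $r$ in a small neighborhood of $0\in R$, the defining isomorphism $\beta^*\xi_{s_0}\simeq \zeta$ yields $\zeta_r \simeq \cE_{\beta(r)}$, and then $\alpha^*\zeta \simeq \xi_{s_0}$ yields $\cE_{\beta(r)}\simeq \zeta_{\alpha\beta(r)}$, so the fibers of $\zeta$ at $r$ and at $\alpha\beta(r)$ are isomorphic holomorphic vector bundles on $X$. Applying Proposition~\ref{pr:Cor46} to the semi-universal deformation $\zeta$ of $\cE_{s_0}$ with reductive group $H=Aut(\cE_{s_0})$ places $\alpha\beta(r)$ in the $H$-orbit of $r$, so the two points descend to the same element of $\wt R\san2 H$. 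The symmetric calculation starting from $s\in Q$ near $s_0$ yields $\cE_s \simeq \zeta_{\alpha(s)} \simeq \cE_{\beta\alpha(s)}$, so $\beta\alpha(s)$ lies in the $G$-orbit of $s$; in particular $\ol{\beta\circ\alpha}$ sends a neighborhood of the image of $s_0$ into the neighborhood $Q\san2 G_{s_0}$ and acts there as the identity on points.

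Functoriality of the descent then gives $\ol\alpha\circ\ol\beta = \ol{\alpha\circ\beta}$ and $\ol\beta\circ\ol\alpha = \ol{\beta\circ\alpha}$ as holomorphic germs, so both compositions are holomorphic endomorphisms that act as the identity on underlying sets. Since $S$ and $R$ are reduced and, by Theorem~\ref{th:invfct}, the structure sheaves of their analytic GIT-quotients consist of $G$- and $H$-invariant holomorphic functions on the respective preimages, these quotients are reduced complex spaces; a holomorphic endomorphism of a reduced complex space that fixes every point is the identity morphism. Consequently both compositions are identity germs, and $\ol\beta$ and $\ol\alpha$ are mutually inverse isomorphisms of analytic GIT-quotient germs.

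I expect the most delicate part to be the first, namely verifying that the fiber isomorphisms used in the key identification are available on a common neighborhood of the distinguished points on which the slice $Q$ and the base-change germs $\alpha$, $\beta$ are all simultaneously defined, so that Proposition~\ref{pr:Cor46} may be applied. After these neighborhoods are chosen compatibly, the remainder of the argument is a direct assembly of Lemmas~\ref{le:betadesc} and~\ref{le:alphadesc}, Theorem~\ref{th:invfct}, and Proposition~\ref{pr:Cor46}; in particular, Lemma~\ref{le:gitiso1} is not needed, since orbit-invariance of the compositions is obtained directly from the fiber-isomorphism calculation rather than from semi-universality of $\zeta$.
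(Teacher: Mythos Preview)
Your argument is correct and a bit more direct than the paper's. The paper also invokes Lemmas~\ref{le:betadesc} and~\ref{le:alphadesc} to obtain $\ol\beta$ and $\ol\alpha$, but then it only shows that the single composition $\ol\alpha\circ\ol\beta$ is an isomorphism by appealing to Lemma~\ref{le:gitiso1}, and completes the proof by a separate argument: the underlying maps are homeomorphisms (points of the quotients correspond to polystable isomorphism classes), they are biholomorphic off a thin analytic set, and therefore both factors are isomorphisms by the one-sheeted covering criterion of Grauert--Remmert. Your route bypasses this last step entirely by checking that \emph{both} compositions act as the identity on points, and then using that the analytic GIT-quotients are reduced (so a holomorphic endomorphism fixing all points is the identity morphism). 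In effect you are extracting the content of Lemma~\ref{le:gitiso1}---that the descent of a fiber-preserving self-map is the identity---and applying it symmetrically, which makes the factorization argument unnecessary.

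One small point worth tightening: your ``functoriality of descent'' claim $\ol\alpha\circ\ol\beta=\ol{\alpha\circ\beta}$ is true, but since $\ol\alpha$ is defined via restriction to the slice $Q$ while $\beta(r)$ need not land in $Q$, the identity is not purely formal. It follows because $p_H\circ\alpha$ is already $G$-invariant on all of $(S,s_0)$ (again by Proposition~\ref{pr:Cor46}), so the descended map is uniquely characterised by Theorem~\ref{th:invfct} regardless of whether one presents it via the slice; alternatively, one can verify the equality directly on polystable representatives using Theorem~\ref{th:stab} to ensure $\beta(r)$ is $G$-polystable whenever $r$ is $H$-polystable. Either way your conclusion stands.
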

\begin{proof}
The above holomorphic maps were constructed in Lemma~\ref{le:betadesc} and Lemma~\ref{le:alphadesc}. The underlying topological spaces consist of polystable points so that $\ol \alpha$ and $\ol\beta$ are homeomorphisms, and on the complements of  thin analytic sets these maps are biholomorphic. Moreover, by Lemma~\ref{le:gitiso1} the map $\ol\alpha \circ \ol\beta$ is an isomorphism on the whole space. This is only possible, if both maps are isomorphisms  (cf.\cite[Chap.~8 Sect.\ 2]{g-r} on one-sheeted coverings).
\end{proof}
\begin{remark}\label{re:stable}
  If $\cE_{s_0}$ is stable, then $H\simeq \C^*$ consists of homotheties acting trivially.
\end{remark}

\section{Application to moduli of holomorphic vector bundles}\label{se:appmodvb}
The aim is to construct a complex space whose local models are analytic GIT-quotients such that the complement of a closed analytic set is the coarse moduli space of stable holomorphic vector bundles.

The local model was constructed in Theorem~\ref{th:localmain}. By general theory, in particular the Kempf-Ness Theorem, and Proposition~\ref{pr:trfree}, and by Proposition~\ref{pr:Cor46} the points of the space correspond to isomorphism classes of polystable vector bundles. An analytic version of Luna's slice theorem was shown and Proposition~\ref{pr:alpbet} provides the gluing of local analytic GIT models. Namely, when two such models contain a point with (polystable) isomorphic fibers $E$ say, each of the models is locally isomorphic to the GIT-quotient constructed from a semi-universal deformation of $E$.

Finally the Hausdorff property of the resulting space precisely follows from the same argument as the proof of Lemma~4.2.4 in \cite{d-k} (see also \cite[\S 6]{AHS}).

The space $\cM_{GIT}$ contains the coarse moduli space of stable holomorphic vector bundles by Remark~\ref{re:stable} and Remark~\ref{re:pslocus}.

\begin{theorem}\label{th:main}
Given a compact \ka\ manifold there exists a complex space $\mathcal{M}_{GIT}$, whose local models are analytic GIT-quotients of parameter spaces of holomorphic vector bundles with a restricted group action.

The space $\cM_{GIT}$ contains the coarse moduli space $\mathcal M$ of stable holomorphic vector bundles as an open subspace. The complement $\cM_{GIT}\backslash \cM$ is a closed analytic subspace, whose points correspond to isomorphism classes of polystable, non-stable vector bundles.
\end{theorem}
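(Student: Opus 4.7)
The plan is to cover $\cM_{GIT}$ by the analytic GIT charts furnished by Theorem~\ref{th:localmain} and to glue them using the base-change isomorphisms of Proposition~\ref{pr:alpbet}. Concretely, for each isomorphism class represented by a polystable bundle $E$ on $(X,\omega_X)$, I fix a semi-universal deformation $\xi_E$ over $(S_E,0)\subset U_E\subset V_E=H^1(X,End(E))$ carrying the restricted $G_E=Aut(E)$-action supplied by Theorem~\ref{th:Psi}. Theorem~\ref{th:localmain} then yields the reduced complex space $\wt S_E\san2 G_E$ as a closed analytic subset of the open set $\wt U_E\san2 G_E\subset V_E\s2 G_E$, which will serve as a chart around the point representing $E$. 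Identifying the points of this chart with isomorphism classes of nearby polystable bundles is the content of the Kempf-Ness Theorem combined with Theorem~\ref{th:stab}: every $G_E$-orbit in the polystable locus of $\wt S_E$ contains a representative from $S_E$, and by Proposition~\ref{pr:Cor46} two points of $\wt S_E$ give isomorphic bundles exactly when they are $G_E$-equivalent.

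For the gluing, suppose a point $[E']$ belongs to two charts $\wt S_E\san2 G_E$ and $\wt S_F\san2 G_F$, represented respectively by polystable $s_0\in S_E$ and $t_0\in S_F$ whose fibers are isomorphic to $E'$. Picking a semi-universal deformation $\zeta$ of $E'$ over $(R,0)$ with restricted $H=Aut(E')$-action and applying Proposition~\ref{pr:alpbet} at both $s_0$ and $t_0$ produces mutually inverse biholomorphisms between open neighborhoods of the images of $s_0$ and $t_0$ in the respective charts, each canonically identified with a neighborhood of the origin in $\wt R\san2 H$. Composing these yields the transition map between the two charts near $[E']$. The cocycle condition on triple overlaps is then enforced by Lemma~\ref{le:gitiso1}, since any self-base-change of a semi-universal deformation is an isomorphism inducing the identity on the analytic GIT-quotient, so the two possible comparison morphisms between any two charts agree near their common polystable points.

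The remaining task is to verify the global properties. Hausdorffness follows verbatim from Lemma~4.2.4 of Donaldson-Kronheimer \cite{d-k} (see also \cite[\S6]{AHS}): any two non-isomorphic polystable bundles are separated by their harmonic \he\ representatives, so that limits of isomorphic bundles remain isomorphic in the ambient $L^p_1$-topology. That $\cM\subset\cM_{GIT}$ is open follows from Remark~\ref{re:stable}, which reduces $Aut(E)$ to the trivially acting scalars at every stable $E$, so stable points are precisely the GIT-stable points of each chart; the complement is a closed analytic subspace because by Remark~\ref{re:pslocus} the polystable non-stable locus cuts out a closed analytic subspace of every chart, and these patch compatibly under the gluing described above. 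I expect the main obstacle to be the compatibility check in the gluing step: the Luna-type slice $\wt S_E\cap\wt Y$ used through Theorem~\ref{th:appLuna} is only locally biholomorphic and not canonical, so one has to track carefully how the identifications $\ol\alpha,\ol\beta$ from Proposition~\ref{pr:alpbet} behave under composition on triple overlaps before invoking Lemma~\ref{le:gitiso1} to collapse the ambiguity.
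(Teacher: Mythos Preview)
Your proposal is correct and follows essentially the same route as the paper: local models via Theorem~\ref{th:localmain}, identification of points with isomorphism classes via Kempf--Ness, Theorem~\ref{th:stab} and Proposition~\ref{pr:Cor46}, gluing via Proposition~\ref{pr:alpbet}, Hausdorffness via \cite[Lemma~4.2.4]{d-k}, and the stable locus via Remarks~\ref{re:stable} and~\ref{re:pslocus}. Your explicit handling of the cocycle condition through Lemma~\ref{le:gitiso1} is slightly more detailed than the paper's own account, which leaves this implicit in the phrase ``each of the models is locally isomorphic to the GIT-quotient constructed from a semi-universal deformation of $E$''.
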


\begin{definition}
  Let $\mathcal K$ be the class of polystable (including stable) vector bundles on the given compact \ka\ manifold. A reduced complex space $\cN$ is called a {\em classifying space} for $\mathcal K$, if the following conditions hold.
  \begin{itemize}
    \item[(i)] The points of $\cN$ correspond to isomorphism classes of polystable, holomorphic vector bundles on the \ka\ manifold $X$,
    \item[(ii)] Let $\cE$ be a holomorphic family of holomorphic vector bundles on $X\times Z$, where $Z$ is a reduced complex space, such that the fiber $\cE_{z_0}=\cE|X\times\{z_0\}$, of a point $z_0\in Z$ is polystable. Then, after replacing $Z$ by a neighborhood of $z_0$ if necessary, there exists a unique holomorphic map $\varphi : Z \to \cN$ such that $\varphi(z)\in \cN$ corresponds to the isomorphism class of $\cE_z$ provided $\cE_z$ is polystable.
    \item[(iii)] If $\wt \cN$ is a further reduced complex space satisfying {\rm (i)} and {\rm (ii)} with unique holomorphic maps $\wt \varphi: Z\to \wt\cN$ for families over spaces $Z$ in the sense of {\rm (ii)}, then there exists a unique holomorphic map $\chi: \cN \to \wt \cN$ such that $\chi\circ \varphi = \wt \varphi$.
  \end{itemize}
\end{definition}
The corresponding condition (ii') for a {\em coarse moduli space} $\cN_{\text{coarse}}$ reads
\begin{itemize}
  \item[(ii')] Let $\cE$ be a holomorphic family of polystable holomorphic vector bundles on $X\times Z$, where $Z$ is a reduced complex space. Then there exists a unique holomorphic map $\varphi : Z \to \cN_{\text{coarse}}$ such that $\varphi(z)\in \cN_{\text{coarse}}$ corresponds to the isomorphism class of $\cE_z$.
\end{itemize}
Condition (iii) for a  coarse moduli space would refer to a space satisfying (i) and (ii'). A classifying space for $\cK$ certainly satisfies (ii'). However, since in general there exists no deformation theory for the class $\cK$, it would have to be shown that there exist sufficiently many holomorphic families with fibers from $\cK$ so that condition (iii) still holds.

\begin{theorem}
  The space $\cM_{GIT}$ is a classifying space for the class of polystable holomorphic vector bundles on a compact \ka\ manifold.
\end{theorem}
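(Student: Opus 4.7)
The plan is to verify the three axioms of a classifying space in order, using the local model $\wt R \san2 H$ of $\cM_{GIT}$ around a polystable bundle $E$ from Theorem~\ref{th:main}, the semi-universality of the local Kuranishi-type deformations, and the descent identity of Theorem~\ref{th:invfct}. Condition (i) is essentially already in Theorem~\ref{th:main}: combining the Kempf-Ness description $V^{ps}/G \simeq V\s2 G$ with Proposition~\ref{pr:Cor46} shows that the points of each chart $\wt R \san2 H$ correspond bijectively to isomorphism classes of polystable bundles, and Proposition~\ref{pr:alpbet} confirms that the gluing morphisms respect this correspondence.

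For condition (ii), given a family $\cE$ over $X \times Z$ with polystable central fiber $E = \cE_{z_0}$, I would fix a semi-universal deformation $\zeta$ of $E$ over $(R, 0)$ with its restricted $H = Aut(E)$-action and analytic GIT-quotient $\wt R \san2 H \subset \cM_{GIT}$ provided by Theorems~\ref{th:Psi} and~\ref{th:localmain}. By completeness of $\zeta$ there is a base-change morphism of germs $\gamma \colon (Z, z_0) \to (R, 0)$ with $\gamma^{*}\zeta \simeq \cE$ on a neighborhood of $z_0$. Setting $\varphi := p \circ \gamma$, where $p \colon R \to \wt R \san2 H$ is the quotient projection, yields the required holomorphic map: by Theorem~\ref{th:stab} a parameter $r \in R$ is $H$-polystable precisely when the associated bundle is polystable, and by Proposition~\ref{pr:Cor46} two parameters yield isomorphic bundles iff they lie in the same $H$-orbit. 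Hence $\varphi(z)$ corresponds to the isomorphism class of $\cE_z$ whenever the latter is polystable.

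The main obstacle is uniqueness in (ii). Different choices $\gamma_1, \gamma_2$ of base change satisfy $p \circ \gamma_1 = p \circ \gamma_2$ by Proposition~\ref{pr:Cor46}, since the fibers $\zeta_{\gamma_1(z)}, \zeta_{\gamma_2(z)} \simeq \cE_z$ lie in the same $H$-orbit closure, which $p$ collapses. To exclude a competing $\varphi' \colon Z \to \cM_{GIT}$, I would note that $\varphi$ and $\varphi'$ automatically agree on the Zariski-open subset $Z^{st} \subset Z$ of parameters with stable fiber, which is the complement of an analytic set by Proposition~\ref{pr:trfree} combined with Theorem~\ref{th:stab}. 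The subtle case is when $E$ is strictly polystable and $Z^{st}$ fails to be dense near $z_0$ (e.g.\ when $\cE$ is pulled back from the $H$-orbit of the origin in $R$); here I would work locally inside the chart $\wt R \san2 H$, use Theorem~\ref{th:invfct} to pull $\varphi'$ back to an $H$-invariant holomorphic assignment of parameters compatible with $\cE$, and then appeal to the semi-universality of $\zeta$ to identify this assignment with $p \circ \gamma$.

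For condition (iii), let $\wt \cN$ satisfy (i) and (ii). For each polystable bundle $E$, applying (ii) for $\wt \cN$ to $\zeta$ over $(R, 0)$ yields a holomorphic map $\tilde\varphi_E \colon R \to \wt\cN$. By (i) for $\wt\cN$ together with Proposition~\ref{pr:Cor46}, $\tilde\varphi_E$ is constant on $H$-orbit closures, hence by Theorem~\ref{th:invfct} descends uniquely to a holomorphic map on the local chart $\wt R \san2 H$ of $\cM_{GIT}$ at $[E]$. The local descents are compatible on overlapping charts thanks to the base-change isomorphisms of Proposition~\ref{pr:alpbet} combined with the uniqueness clause of (ii) for $\wt\cN$, and therefore assemble into the required global holomorphic map $\chi \colon \cM_{GIT} \to \wt\cN$ satisfying $\chi \circ \varphi = \tilde\varphi$ for every family. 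Uniqueness of $\chi$ is then inherited from the uniqueness asserted in (ii).
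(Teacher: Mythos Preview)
Your proposal follows essentially the same route as the paper: verify axioms (i)--(iii) by working in a local model given by a semi-universal deformation $(R,0)$ of the polystable central fiber, define $\varphi$ as the composition of a base-change map with the quotient projection $p$, use Proposition~\ref{pr:Cor46} to show that points with isomorphic fibers lie in the same orbit (hence have the same image under $p$), and use Theorem~\ref{th:invfct} to descend $G$-invariant holomorphic data to the analytic GIT-quotient in the proof of (iii).

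One point of comparison deserves comment. For uniqueness in (ii), the paper only checks that $\varphi=p\circ\psi$ is independent of the choice of base-change map $\psi$; it does not separately rule out a competing holomorphic $\varphi'$ that is not a priori of this form. You go further and try to handle such a $\varphi'$, first on the stable locus and then in the ``subtle case'' where that locus is not dense. Your resolution of the subtle case, however, is not convincing as written: Theorem~\ref{th:invfct} identifies functions on the quotient with invariant functions upstairs, but it does not give you a way to lift $\varphi'$ to an $H$-equivariant map into $R$, and semi-universality of $\zeta$ only pins down the \emph{derivative} of a base-change map at the origin, not the map itself. So this step remains a gap---but it is a gap the paper's own proof shares, since the paper simply declares uniqueness after showing independence of $\psi$. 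Relative to the paper's argument, your proposal is at least as complete.
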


\begin{proof}
  The first condition follows from the construction, which identifies local models, if the polystable fibers are isomorphic.

  Let a holomorphic family $\zeta$ in the sense of (ii) be given. After replacing $Z$ by a neighborhood of $z_0$ the given family is isomorphic to the pull-back $\psi^*\xi$ of a semi-universal family $\xi$ over a space $(S,s_0)$ where $\psi:(Z,z_0) \to (S,s_0)$ is the base change map. Let $G$ be the automorphism group of the central fiber. The analytic quotient $p:S\to S\san2 G\subset \cM_{GIT}$ is a local model, and we set $\varphi= p\circ \psi$. So far $\psi$ is uniquely determined by the deformation $\zeta$ only on the set of polystable points. If $z\in Z$ is any point, and $s=\psi(z)$, then the image in $S\san2 G$ corresponds to a polystable point in the closure $\ol{G\cdot s}$ of the orbit of $s$ which is unique up to the action of $K$. Let $\psi$ and $\wt \psi$ be two choices for the same deformation $\zeta$. Then for any $z\in Z$, and $s=\psi(z)$, $\wt s=\wt\psi(z)$ the bundles $\cE_s$ and $\cE_{\wt s}$ are isomorphic. By Proposition~\ref{pr:Cor46} the points $s$ and $\wt s$ are in the same $G$-orbit, hence the images in $S\san2 G$ are the same. This shows existence and uniqueness of $\varphi$.

  In order to prove (iii) a map $\chi : \cM_{GIT} \to \wt\cN$ has to be constructed. The pointwise definition and uniqueness of any such map  $\chi$ follows because of condition (i) holding for both $\cM_{GIT}$ and $\wt \cN$. The holomorphicity of $\chi$ is shown as follows: Let $(S,0) \stackrel{p}{\longrightarrow}S\san2 G \subset \cM_{GIT}$ be a local model. Then by (ii) for $\wt\cN$ there exists a holomorphic function $\wt\varphi : S \to \wt\cN$ so that $\chi\circ p = \wt\varphi$ pointwise. The map can be extended to the respective $G$-invariant space $\wt S$.
  $$
  \xymatrix{
  \wt S \ar[r]^p \ar[drr]_{\wt\varphi} & \wt S\san2 G\,\,\,  \subset \hspace{-8mm} &\cM_{GIT}\ar[d]^\chi\\ && \wt\cN
  }
  $$
  Let $Q$ be an open neighborhood of $\wt\varphi(0)$. Then holomorphic functions on $Q$, pulled back to $\wt S$ are $G$-invariant, hence holomorphic on $\chi^{-1}(Q) \subset \cM_{GIT}$ by Theorem~\ref{th:invfct}. This shows that $\chi$ is holomorphic. For families over arbitrary spaces $Z$ in the sense of condition (ii) the construction is compatible with a base change map $\psi:Z\to S$, where $S$ is the base of a semi-universal deformation. This implies the claim together with the uniqueness of maps from parameter spaces to $\cM_{GIT}$ and $\wt\cN$ resp.
\end{proof}

The question about the existence of a coarse moduli space of polystable vector bundles will be treated by the authors in a forthcoming article.

\section{Generalized \wp\ metric}\label{se:WP}

\subsection{General theory}\label{se:genth}
The coarse moduli space of stable holomorphic vector bundles on a compact \ka\ manifold is known to carry a \ka\ form $\omega^{WP}$. For notation and properties we refer to \cite{b-s}, and only give some essential facts. As a Hermitian metric it is defined as the $L^2$-inner product of  \ks\ tensors that are harmonic with respect to \he\ metrics on the given bundle and the \ka\ form on the fixed manifold.

Let $(X,\omega_X)$ be a compact \ka\ manifold. Then a holomorphic family $\{\cE_s\}_{s\in S}$ of stable vector bundles parameterized by a reduced complex space $S$ is given by a holomorphic vector bundle $\cE$ on $X\times S$, such that $\cE_s=\cE|X\times \{s\}$. Set $E=\cE_{s_0}$ for $s_0\in S$. Let $h$ be a Hermitian metric such that $h|\cE_s$ is \he, and denote by $F$ the curvature form of $h$, where
$$
\Lambda(\ii F|\cE_s)= \lambda_E \cdot id_{\cE_{s}}\qquad \lambda_E= \frac{2\pi (c_1(\cE_s)\we \omega_X^{n-1})[X]}{rank(\cE_s)\cdot vol(X)}.
$$
Let again
$$
\rho: T_{s_0}S \to H^1(X, End(E))
$$
be the \ks\ map and $v\in  T_{s_0}S$ a tangent vector.
Then the \he\ condition implies immediately that the harmonic representative $A_v$ of $\rho(v)$ equals the contraction of the tangent vector with the curvature form $F$ of $(\cE,h)$ resulting in a $\ol\pt$-closed $(0,1)$-form of class $\cinf$:
\begin{equation}\label{eq:Av}
A_v= v\, \lrcorner\, \ii F|E \in \cA^{0,1}(X,End(E)).
\end{equation}
Namely:
$$
\ol\pt^*(v\, \lrcorner\, \ii F|E)= v(\Lambda(\ii F|\cE_{s_0}))=v(\lambda_E\cdot id _{Es_0})=0,
$$
where the tangent vector $v\in T_{s_0}S$ is lifted to a vector field along $X\times\{s_0\}$  in $S$-direction.

This equation also holds for polystable structures, so it is meaningful to extend the \wp\ inner product to polystable points.

The above fact yields that the \wp\ form satisfies a general fiber integral formula, which immediately implies that the result is a positive definite, $d$-closed, real $(1,1)$-form:
\begin{equation}\label{eq:fbint}
\omega^{WP}_S= \frac{1}{2}\int_{X\times S/S} \tr(F\we F) \we \omega^{n-1}_X + \lambda_E\int_{X\times S/S} tr(\ii F) \we \omega^{n}_X.
\end{equation}
The second term in \eqref{eq:fbint} and an unwanted contribution in the first term cancel out.
By rescaling the hermitian metric on $\cE$ both of these can assumed to be equal to zero.

\subsection{Application to the classifying space $\cM_{GIT}$} Let $E$ be a polystable vector bundle with a semi-universal deformation parameterized by a reduced complex space $S\subset U\subset V$ such that $E=\cE_{s_0}$ in the situation of Sections \ref{se:appdefvb} and \ref{se:appmodvb}. The local model for the classifying space is equal to $S\san2 G$, and its underlying topological space is equal to the quotient $(\mu^{-1}(0)\cap S)/ K$.

According to Section~\ref{se:appdefvb}, in particular Remark~\ref{re:semi-c} a semi-connection
on $E\times U$ over $X\times U$ exists that is integrable over $X\times S$. Denote by $F$ the curvature form. Then the righthand side of the fiber integral $\eqref{eq:fbint}$ taken over $X\times U/U$ defines a symplectic form $\omega_U$ of class $\cinf$.
\begin{proposition}\label{pr:2form}
The restriction of the form $\omega_U$ to $S$ is of type $(1,1)$. The form $\omega_U|(\mu^{-1}(0)\cap U)$ is $K$-invariant. Its restriction to the stable locus $S'\subset S\subset U$ is the pull-back of the \wp\ form. At polystable points the form $\omega^{WP}_S$ defines the inner product of harmonic \ks\ tensors. At $0\in S$ it is positive definite.
\end{proposition}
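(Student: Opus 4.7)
My plan is to treat the five assertions in turn, all starting from the explicit expression
\[
\omega_U=\frac{1}{2}\int_{X\times U/U}\tr(F\we F)\we\omega_X^{n-1}+\lambda_E\int_{X\times U/U}\tr(\ii F)\we\omega_X^n,
\]
where $F$ is the curvature of the Chern connection on $E\times U$ attached to the semi-connection $a''$ of Remark~\ref{re:semi-c} together with the fixed \he\ metric on $E$ (extended trivially along the $U$-direction). Smoothness of $a''$ in $u\in U$, coming from the estimates in \cite{bu-sch}, implies that $F$ is a smooth $End(E)$-valued $2$-form on $X\times U$, so $\omega_U$ is a well-defined smooth closed $2$-form on $U$.

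For the bidegree claim I would invoke Remark~\ref{re:semi-c}: the semi-connection is integrable over $X\times S$, hence $(\cE|_{X\times S},h)$ is a holomorphic hermitian bundle and $F|_{X\times S}$ has pure bidegree $(1,1)$ with respect to the total complex structure. Consequently $\tr(F\we F)\we\omega_X^{n-1}$ has bidegree $(n+1,n+1)$ on $X\times S$, and fiber integration over the $(n,n)$-dimensional fibers produces a $(1,1)$-form on $S$; the linear term is handled identically. For $K$-invariance I observe that the construction of $a''$ in \cite{bu-sch} depends only on the operators $\ol\pt_0$, $\ol\pt^*_0$ and the harmonic projection $\Pi$ associated to the fixed \he\ metric. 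Since $K\subset G$ consists of unitary gauge transformations commuting with each of these, the map $\alpha\mapsto a''$ is $K$-equivariant, $F$ transforms equivariantly, and $\tr(F\we F)\we\omega_X^{n-1}$ is a $K$-invariant form on $X\times U$. Fiber integration then yields a $K$-invariant form $\omega_U$ on $U$, \emph{a fortiori} on $\mu^{-1}(0)\cap U$.

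The remaining assertions all reduce to the pointwise identity \eqref{eq:Av}. At any $s\in S$ for which $\cE_s$ is \he, the harmonic representative of $\rho(v)$ equals $A_v=v\,\lrcorner\,\ii F|_{\cE_s}$, and substituting $A_u,A_v$ into \eqref{eq:fbint} gives (after the rescaling of $h$ cancelling the unwanted contributions, as noted after \eqref{eq:fbint}) the $L^2$-inner product $\omega_U(u,v)|_s=\langle A_u,A_v\rangle_{L^2}$. On the stable locus this is by definition the \wp\ form; at a polystable $s$ the same formula gives the $L^2$-pairing of harmonic \ks\ tensors with respect to the \he\ metric on $\cE_s$; and at $s_0=0$ the \ks\ map $\rho$ is the inclusion $T_0 S\hookrightarrow V=H^1(X,End(E))$, each tangent vector is already harmonic, and the $L^2$-pairing on $V$ is a genuine hermitian inner product, so $\omega_U|_0$ is positive definite on $T_0 S$.

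The principal obstacle I anticipate is that the initial metric $h$, extended trivially along $U$, need not be \he\ on a neighbouring fibre $\cE_s$, so that \eqref{eq:Av} does not literally apply at $s\neq 0$. I would overcome this by replacing $h$ fiberwise by the (unique up to scaling) \he\ metric $h_s$ on the polystable fibre $\cE_s$, which varies smoothly by standard elliptic regularity. The two metrics differ by a fiberwise self-adjoint gauge transformation, so the corresponding Chern curvatures differ by a $\pt\ol\pt$-exact Bott--Chern secondary term whose fiber integral against $\omega_X^{n-1}$ vanishes pointwise on the base (cf.\ \cite{b-s}). Hence the value of $\omega_U$ at a polystable point is unchanged by this modification, and the desired identification with the \wp\ pairing follows.
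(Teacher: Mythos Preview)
Your approach to the first two assertions---type $(1,1)$ on $S$ via integrability of the semi-connection, and $K$-invariance via $K$-equivariance of the assignment $\alpha\mapsto a''$---is the same as the paper's, only spelled out in more detail. For the remaining assertions the paper takes a shorter route: it simply asserts that at points of $S\cap\mu^{-1}(0)$ the identity \eqref{eq:Av} is valid, which rests on the feature of the Kuranishi-type construction in \cite{bu-sch} that the zero locus of the finite-dimensional moment map $\mu$ parameterizes connections that are already \he\ for the \emph{fixed} metric $h$. This sidesteps the obstacle you anticipate, and the \wp\ interpretation at polystable points and positive-definiteness at $0$ follow immediately.

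Your proposed Bott--Chern workaround, however, does not deliver what you claim. Replacing $h$ by the family of fiberwise \he\ metrics $h_s$ and invoking the Bott--Chern secondary class shows that $\tr(F\wedge F)$ and $\tr(F_{h_s}\wedge F_{h_s})$ differ by a $\partial\bar\partial$-exact form on $X\times S$; after fiber integration this says the two base forms differ by a $\partial\bar\partial$-exact form on $S$, i.e.\ they are \emph{cohomologous}. It does not force the difference to vanish pointwise on the base: the value of the fiber integral at $s$ depends on the $S$-direction components of $F$, hence on the metric in a full neighborhood of $X\times\{s\}$, not merely on its restriction to that fiber. Thus your argument would only identify $\omega_U|_{S'}$ with the pull-back of the \wp\ form up to an exact term, which is weaker than the pointwise equality asserted in the proposition. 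The paper avoids this issue by working directly at $\mu^{-1}(0)$-points, where no change of metric is required.
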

\begin{proof}
  At integrable points the curvature $F$ is of type $(1,1)$ so that $\omega_U$ is of type $(1,1)$ on $S$. At points of $\mu^{-1}(0)$ the relevant connection is $K$-invariant, so is $F$, which shows that the fiber integral has this property. At points of $S\cap \mu^{-1}(0)$ equation \eqref{eq:Av} is also valid so that \eqref{eq:fbint} defines the \wp\ inner product.
\end{proof}
For the restriction of the moduli space $\cM_{GIT}$ to a (smooth) connected component $\cM_0$ consisting of objects whose semi-universal deformations are unobstructed more can be said. Singular symplectic quotients and locally stratified spaces were studied by  Sjamaar and Lerman \cite{s-l}. Here the complex structure of the classifying space is of interest. We assume that on $\cM_0$ the stable points form an everywhere dense subset.
\begin{theorem}
The \wp\ form on the stable locus of $\cM_0$ extends as a positive $(1,1)$-current $\omega^{WP}_0$  to $\cM_0$ that possesses locally a continuous $\pt\ol\pt$-potential $\chi$.
\end{theorem}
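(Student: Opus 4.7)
The plan is to work locally on coordinate charts $\wt U\san2 G$ of $\cM_0$ near a polystable point $[\cE_{s_0}]$, construct a $K$-invariant plurisubharmonic potential on the ambient ball $U$, descend it to a continuous plurisubharmonic function on $\wt U\san2 G$, and check that the resulting local positive currents agree with $\omega^{WP}$ on the stable locus and glue to a global current.

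First, for the local \ka\ potential on $U$: since $\cM_0$ parameterizes bundles with unobstructed deformations, the Kuranishi obstruction $\Psi$ of \eqref{eq:Psi} vanishes in a neighborhood of $s_0$, so $S=U$ locally. Proposition~\ref{pr:2form} then makes $\omega_U$ a closed smooth $(1,1)$-form on $U$ which is positive definite at $0$, hence (after shrinking) a \ka\ form on the $K$-invariant ball $U\subset V=H^1(X,End(\cE_{s_0}))$. The $\pt\ol\pt$-lemma on the contractible Stein domain $U$ yields a smooth real $\chi_U$ with $\omega_U=\ii\pt\ol\pt\chi_U$, and averaging over the maximal compact $K\subset G$ produces a $K$-invariant smooth plurisubharmonic function $\tilde\chi_U(z):=\int_K\chi_U(k\cdot z)\,dk$ whose $\ii\pt\ol\pt$ is the $K$-average of $\omega_U$, and which in particular coincides with $\omega_U$ on $\mu^{-1}(0)\cap U$ by the $K$-invariance statement in Proposition~\ref{pr:2form}.

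Next, I descend to the analytic GIT-quotient. By Kempf-Ness the restriction of $p$ to $\mu^{-1}(0)\cap\wt U$ is a proper continuous surjection onto $\wt U\san2 G$ whose fibers are $K$-orbits, so $\tilde\chi_U$ descends to a continuous function $\chi$ on $\wt U\san2 G$. The proposition following Theorem~\ref{th:invfct} (via Heinzner's complexification) shows that $p^{-1}(W')$ is Stein for every Stein open $W'\subset\wt U\san2 G$, placing us in the framework of Heinzner-Loose \cite{h-l}: a $K$-invariant continuous plurisubharmonic function on $p^{-1}(W')$ descends to a continuous plurisubharmonic function on $W'$, so $\ii\pt\ol\pt\chi$ defines a positive $(1,1)$-current on $\wt U\san2 G$. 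Over the smooth stable locus, where $p|_{\mu^{-1}(0)}$ is a principal $K$-bundle, this current coincides with $\omega^{WP}$ by Proposition~\ref{pr:2form}.

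Finally, I glue. Two potentials $\chi_1,\chi_2$ built on overlapping charts both satisfy $\ii\pt\ol\pt\chi_j=\omega^{WP}$ on the dense open stable subset of the overlap, so the associated positive currents coincide on a dense open set; since each is locally given by a continuous potential, they coincide as currents on the whole overlap and glue to a global positive $(1,1)$-current $\omega^{WP}_0$ on $\cM_0$ with continuous local $\pt\ol\pt$-potential. The main technical obstacle is the descent step: verifying that a $K$-invariant smooth plurisubharmonic function on the Stein space $p^{-1}(W')$ really descends to a plurisubharmonic function (in the sheaf-theoretic sense appropriate to a possibly singular complex space) on $W'$. This is exactly the Heinzner-Loose statement on reductive quotients of Stein $G$-spaces, combined with Theorem~\ref{th:invfct}, and is the analogue of the ingredient used by Dervan-Naumann \cite{d-n} in the cscK setting.
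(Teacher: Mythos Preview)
Your argument is correct and follows essentially the same route as the paper: use unobstructedness to get $S=U$, take a smooth $\pt\ol\pt$-potential for $\omega_U$, average over $K$, descend via Kempf--Ness to a continuous function on $\mu^{-1}(0)/K\simeq\wt U\san2 G$, and observe smoothness on the stable locus where the $K$-action has maximal rank. The paper is terser about the plurisubharmonicity on the (possibly singular) quotient and about the gluing, whereas you make these explicit by invoking Heinzner--Loose and a density argument; this is a welcome clarification rather than a genuinely different strategy.
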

\begin{proof}
  The classifying space is locally of the form $\wt U\san2 G$ with $S=U$ in the above notation. The space $U$ can be chosen in a way that $\omega_U$ possesses a $\pt\ol\pt$-potential $\chi$ of class $\cinf$. A $\cinf$ $K$-invariant function  on $\mu^{-1}(0)$ is given by integration over the group $K$ which descends to a continuous function $\ol\chi$ on the quotient   $\mu^{-1}(0)/K$. At the open locus of stable points, locally the action of $K$ is of maximal differentiable rank so that $\ol\chi$ is of class $\cinf$ on the set of stable points and is a potential for the \wp\ form there. Because of the continuity of local potentials these define a global extension as a positive current (with positivity being primarily defined on the normalization).
\end{proof}
\subsection{The \wp\ form for projective manifolds}
The aim is to define an extension of the \wp\ form also if the parameter spaces for polystable bundles are singular. This is possible for projective manifolds $X$ such that $\omega_X=c_1(\cL,h_\cL)$ for a positive holomorphic line bundle $\cL$. The following general fact about determinant line bundles and Quillen metrics is applicable (cf.\ e.g.\ \cite{b-s}, in particular $(4.11)$ for the definition of the determinant line bundle $\lambda$ equipped with the Quillen metric $h^Q$ on the base $S$ for a family of hermitian holomorphic vector bundles on $X\times S$, and $(4.13)$ for the Chern form of $(\lambda,h^Q)$). The formulas hold for a holomorphic family $(\cE, h)$ of hermitian vector bundles of rank $r$ over $X\times S$. The determinant line bundle $\lambda$ on $S$ is defined in terms of a certain virtual holomorphic vector bundle:
\begin{equation}\label{eq:detbdl}
  \lambda= \det \ul{\ul R}p_* \left( (End(\cE)-\cO^{r^2}_{X\times S} ) \otimes( p^*\cL -p^*\cL^{-1})^{\otimes(N-1)}  \right)
\end{equation}
The Quillen metric $h^Q$ depends upon the fiberwise $L^2$ norms and the zeta function with respect to the fiberwise Dirac operators/Laplacians of the holomorphic bundles that are involved. In particular the Quillen metric is of class $\cinf$. The formula of Bismut, Gillet, and Soul\'e (cf.\ \cite{b-s} for a list of references) implies \cite[(4.13)]{b-s} so that
\begin{equation}\label{eq:bgsrr}
   c_1(\lambda,h^Q)= - \left(\int_{X\times S/S} {\rm td}(X\times S/S,\omega_X)\cdot {\rm ch}(\cF,h) \right)^{(1,1)},
\end{equation}
where $\mathrm{td}(X\times S/S)$ denotes the relative Todd character form, and where $\cF$ denotes the virtual bundle on the righthand side of \eqref{eq:detbdl}. For singular spaces $S$ see \cite{f-s}. The fiber integral formula \eqref{eq:fbint} for the \wp\ form together with this formula imply the following fact.
\begin{fact}
Let a semi-universal deformation of a polystable holomorphic vector bundle be given with parameter space $S$. Then on the stable locus $S'$ up to a fixed positive numerical constant the curvature form of the determinant line bundle $(\lambda,h^Q)$ is equal to the pull-back of the \wp\ form $\omega^{WP}_{S'}$:
$$
c_1(\lambda,h^Q)|S'\simeq \omega^{WP}_{S'}.
$$
\end{fact}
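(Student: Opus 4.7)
The plan is to compare the Bismut-Gillet-Soul\'e identity \eqref{eq:bgsrr} with the fiber integral formula \eqref{eq:fbint} for $\omega^{WP}$. Both sides are fiber integrals over $X\times S/S$ of universal polynomials in the curvature form $F$ of $(\cE,h)$ and in $\omega_X=c_1(\cL,h_\cL)$, so the statement reduces to matching the $(1,1)$-component on $S'$ of the BGS integrand with a positive multiple of the integrand of \eqref{eq:fbint}. On the stable locus the semi-connection from Remark~\ref{re:semi-c} is integrable, so $F$ is of type $(1,1)$ and $(\cE,h)$ is an honest family of hermitian holomorphic bundles, which is the setting in which the BGS formula applies (and, via \cite{f-s}, survives base change to a possibly singular $S'$).

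First, I would expand $\mathrm{ch}(\cF)$ from \eqref{eq:detbdl}. Since $\mathrm{ch}(\cL-\cL^{-1},h_\cL)=2\sinh(\omega_X)$, the polarization factor $\mathrm{ch}(p^*\cL-p^*\cL^{-1})^{N-1}$ contributes only powers of $\omega_X$ starting from $(2\omega_X)^{N-1}$. The factor $\mathrm{ch}(End(\cE)-\cO^{r^2})$ has no degree-zero term and its degree-two part equals $2r\,\mathrm{ch}_2(\cE)-c_1(\cE)^2$, which in curvature is a fixed linear combination of $\mathrm{tr}(F\we F)$ and $\mathrm{tr}(F)\we\mathrm{tr}(F)$. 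The relative Todd class $\mathrm{td}(X\times S/S,\omega_X)$ is the pullback from $X$ of the Todd form of $TX$, so it contributes only forms in the $X$-direction and thus scalar constants after fiber integration at the $(1,1)$-level on $S$.

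Second, I would extract the $(1,1)$-component on $S'$ of the resulting integral. Only monomials with $X$-degree $2n$ and $S$-bidegree $(1,1)$ survive, and these are exactly the ones producing $\int_{X\times S/S}\mathrm{tr}(F\we F)\we\omega_X^{n-1}$ and $\int_{X\times S/S}\mathrm{tr}(F)\we\omega_X^{n}$, which match the two summands of \eqref{eq:fbint}. Rescaling $h$ so that the $\mathrm{tr}(F)$-contributions cancel (as remarked after \eqref{eq:fbint}) then leaves a strictly positive constant times the fiber integral of $\mathrm{tr}(F\we F)\we\omega_X^{n-1}$. Finally the Hermite-Einstein identity \eqref{eq:Av} on $S'$ identifies this fiber integral with the $L^2$-pairing of harmonic Kodaira-Spencer representatives, i.e.\ with $\omega^{WP}_{S'}$.

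The main obstacle is the numerical bookkeeping needed to check that the constant multiplying $\int_{X\times S/S}\mathrm{tr}(F\we F)\we\omega_X^{n-1}$ on the BGS side is nonzero and positive; this involves cancellations between the $\mathrm{tr}(F)\we\mathrm{tr}(F)$ contribution from $\mathrm{ch}(End(\cE)-\cO^{r^2})$ and the subleading terms in the polarization and Todd expansions. Exactly this computation is carried out in \cite{b-s} for holomorphic families of stable bundles on projective manifolds, and since both \eqref{eq:bgsrr} and \eqref{eq:fbint} are compatible with base change the identification transfers verbatim to the stable locus of the semi-universal parameter space $S$ of a polystable bundle.
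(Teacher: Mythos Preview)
Your proposal is correct and follows precisely the approach the paper takes: the paper itself does not spell out a proof of this \emph{Fact} beyond the single sentence ``The fiber integral formula \eqref{eq:fbint} for the \wp\ form together with this formula imply the following fact,'' referring the numerical details to \cite{b-s}. Your expansion of $\mathrm{ch}(\cF)$, extraction of the $(1,1)$-component, and deferral of the positivity constant to \cite{b-s} is exactly the intended argument, only made more explicit than the paper's own treatment.
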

In general the fiber integral \eqref{eq:fbint} only implies that $\omega^{WP}_S$ is the restriction of a symplectic form on the smooth ambient space $U$ to the reduced analytic subspace $S$. In the case of a Hodge manifold $X$ it possesses a $\pt\ol\pt$-potential of class $\cinf$. Now the arguments of Section~\ref{se:genth} are applicable to the situation where semi-universal deformations are not necessarily unobstructed.
\begin{theorem}
  Let $(X,\omega_X)$ be a Hodge manifold with $\omega_X=c_1(\cL,h_\cL)$, and denote by $\cM_{GIT_0}$ the moduli space of polystable vector bundles that can be deformed locally into stable vector bundles. Then the \wp\ form on the stable locus extends to $\cM_{GIT_0}$ as a positive current with local continuous $\pt\ol\pt$-potentials.
\end{theorem}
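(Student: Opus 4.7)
The plan is to mimic the proof for the unobstructed case given just before, but exploit the Hodge hypothesis to supply the smooth potential on the smooth ambient space $U$ that is no longer automatic once the semi-universal parameter space $S$ is a proper analytic subspace of $U$.

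First I would fix a local model at a point of $\cM_{GIT_0}$: by Theorem~\ref{th:localmain} this is an analytic GIT-quotient $\wt U \san2 G$ with $S \hookrightarrow U \hookrightarrow V = H^1(X,End(E))$, acted on by $G = Aut(E)$ and its maximal compact subgroup $K$; by Kempf-Ness it is identified topologically with $(\mu^{-1}(0) \cap U)/K$, and the polystable parameter locus with $(\mu^{-1}(0) \cap S)/K$. The semi-connection on $E \times U$ of Remark~\ref{re:semi-c} is integrable precisely on $X \times S$ and is $K$-equivariant; feeding its curvature into the fiber integral \eqref{eq:fbint} defines a real closed $2$-form $\omega_U$ of class $\cinf$ on the smooth $U$, whose restriction to $S$ is of type $(1,1)$ and recovers $\omega^{WP}_S$ at integrable points (Proposition~\ref{pr:2form}). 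By hypothesis on $\cM_{GIT_0}$, the stable locus $S'\subset S$ is dense.

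Second I would produce a $K$-invariant $\cinf$ $\pt\ol\pt$-potential $\chi$ for $\omega_U$ on $U$. Under the Hodge assumption $\omega_X = c_1(\cL,h_\cL)$, the recipe \eqref{eq:detbdl} makes sense already for the family of fiberwise Dirac operators determined by the smooth (not necessarily integrable) semi-connections on $E\times U$, and yields a smooth complex line bundle $\lambda\to U$ equipped with a smooth Quillen metric $h^Q$; the Bismut-Gillet-Soul\'e formula \eqref{eq:bgsrr} is at heart an identity of differential forms and identifies $c_1(\lambda,h^Q)$ with $\omega_U$ up to a fixed positive constant. After shrinking $U$ so that $\lambda$ is $\cinf$-trivial, a smooth trivialization yields a $\cinf$ $\pt\ol\pt$-potential $\chi = -\log h^Q$, and averaging over the compact group $K$ (using the $K$-equivariance of the whole construction) we may take $\chi$ to be $K$-invariant.

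Third I would descend and glue. The $K$-invariant smooth function $\chi$ restricted to $\mu^{-1}(0) \cap U$ pushes down to a continuous function on $\wt U \san2 G$ via the Kempf-Ness homeomorphism, hence restricts to a continuous function $\ol\chi$ on $\wt S \san2 G$. On the open dense stable stratum, the $K$-action on $\mu^{-1}(0) \cap S$ is locally free (Theorem~\ref{th:stab} together with Matsushima), so $\ol\chi$ is smooth there and, by Proposition~\ref{pr:2form} and the Fact, satisfies $\ddb\ol\chi=\omega^{WP}_{S'}$ up to the fixed numerical constant. The slice-theoretic identifications of Proposition~\ref{pr:alpbet} match semi-universal deformations at common polystable fibers, and the naturality of \eqref{eq:detbdl} and $h^Q$ under these identifications implies that two continuous local potentials $\ol\chi$ differ by a pluriharmonic function on each overlap. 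Consequently $\omega^{WP}$ on the stable locus extends to a globally defined positive $(1,1)$-current $\omega^{WP}_0$ on all of $\cM_{GIT_0}$ whose local $\pt\ol\pt$-potentials $\ol\chi$ are continuous.

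The main obstacle is the second step: the classical Bismut-Gillet-Soul\'e theorem is stated for holomorphic families, whereas on $U$ only the restriction to $S$ is integrable. The fix is to appeal to the differential-form version of BGS for smooth families of fiberwise Dirac operators, in which the Quillen metric and the anomaly formula are defined analytically from the smooth semi-connection without reference to holomorphicity of the total family; the holomorphic BGS identity on $S$ is then recovered by restriction. Handling this rigorously, together with verifying continuity of $\ol\chi$ at orbits with non-trivial stabilizer and compatibility of the smooth determinant line bundles under the slice identifications of Proposition~\ref{pr:alpbet}, is the main technical work.
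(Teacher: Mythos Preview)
Your overall architecture (produce a $K$-invariant $\cinf$ potential, restrict to $\mu^{-1}(0)$, push down via Kempf--Ness, and appeal to density of the stable locus) matches the paper's. The difference lies in \emph{where} you manufacture the smooth $\pt\ol\pt$-potential.

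The paper does not try to extend the determinant line bundle or the BGS identity to the non-integrable ambient $U$. Instead it applies the standard holomorphic Bismut--Gillet--Soul\'e formula directly over the (possibly singular) parameter space $S$, where the family $\cE\to X\times S$ is genuinely holomorphic; for singular $S$ this is covered by \cite{f-s}. Thus $(\lambda,h^Q)$ lives on $S$, the Quillen metric is $\cinf$ on $S$, and after trivialising $\lambda$ near $0$ one has a $\cinf$ potential $\chi$ on $S$ itself. Since $S$ (and the whole construction) is $K$-equivariant, one restricts $\chi$ to $\mu^{-1}(0)\cap S$, averages over $K$, and descends to a continuous function on $(\mu^{-1}(0)\cap S)/K\simeq \wt S\san2 G$ exactly as in the unobstructed proof. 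The smooth ambient $U$ plays no role beyond Proposition~\ref{pr:2form}.

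So the ``main obstacle'' you flag --- a non-holomorphic BGS formula for smooth families of semi-connections over $U$ --- is a detour rather than a gap. Your route can presumably be pushed through (a Bismut--Freed type curvature identity for families of Dirac operators does exist), but it buys nothing here: you only need the potential on $S$, and the classical holomorphic BGS already supplies it. Once you relocate step~two to $S$, the remaining steps of your proposal coincide with the paper's argument essentially verbatim.
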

The abstract methods of Dervan and Naumann \cite{d-n} are applicable now. We indicate their argument very briefly. The coherent sheaf on the GIT-quotient for a local model defined by $G$-invariant sections of the respective determinant line bundle becomes invertible, when the determinant line bundle is replaced by a finite power  given by the number of connected components of $G$ (cf.\ \cite{sj}). Using Kirwan's stratification of GIT quotients \cite{Kir} they show that the continuous potentials for the \wp\ metric give rise to continuous (singular) hermitian metrics on these line bundles, which is of class $\cinf$ on the stable locus using the stratification of the GIT-quotients. Altogether the results of Dervan and Naumann yield the following.

\medskip

\begin{appl*}
Derived from determinant line bundles equipped with Quillen metrics there exists a holomorphic line bundle $\lambda_{\cM_0}$ on $\cM_0$, equipped with a continuous (singular) hermitian metric, which is of class $\cinf$ on the stable locus. Its curvature current is positive, and equal to the \wp\ form on the stable locus.
\end{appl*}

\end{document}